\newtheorem{theorem}{Theorem}[section]
\newtheorem{corollary}[theorem]{Corollary}
\newtheorem{prop}[theorem]{Proposition}
\theoremstyle{definition}
\newtheorem{definition}[theorem]{Definition}
\newtheorem{example}[theorem]{Example}
\newtheorem{remark}[theorem]{Remark}
\newcommand{\C}{\mathbb{C}}
\newcommand{\PP}{\mathbb{P}}
\DeclareMathAlphabet{\pazocal}{OMS}{zplm}{m}{n}
\newcommand{\A}{{\pazocal{A}}}
\newcommand{\OO}{{\pazocal O}}
\newcommand{\cO}{\mathcal{O}}
\newcommand{\cE}{{\mathcal{E}}}
\newcommand{\CC}{{\mathcal{C}}}
\def\dot{\mathchar"013A}
\newcommand{\hdot}{{\raise1pt\hbox to0.35em{\Huge $\dot$}}}
\begin{document}
\date{January, 2025}

\title[Deletion-addition of a smooth conic for free curves]%
{Deletion-addition of a smooth conic for free curves}
\author{Anca~M\u acinic$^*$}
\thanks{$^*$ Partially supported by the project "Singularities and Applications'' - CF 132/31.07.2023 funded by the European Union - NextGenerationEU - through Romania's National Recovery and Resilience Plan.}

\subjclass[2020]{14H50 (Primary); 14F06, 14C17 (Secondary)}

\keywords{conic-line arrangement; free curve; plus-one generated curve; logarithmic vector field; logarithmic derivations module}

\begin{abstract}
We describe the behaviour of a free reduced plane projective curve with respect to the deletion, respectively addition, of a smooth conic. These results apply in particular to conic-line arrangements. We present some obstructions to the geometry and combinatorics of a free reduced curve, generalizing results known a priori only for free projective line arrangements. 
\end{abstract}

\maketitle

\section{Introduction}
\label{sec:introduction}

Let $\CC$ be a reduced curve in $\PP^2 \coloneqq \PP^2 \C$. If $\CC$  is the union of a finite number of lines and smooth conics, $\CC$ is called a {\it conic-line arrangement}, in short, a CL-arrangement.
One important motivation to study CL-arrangements is to explore generalizations in this direction of  the theory of hyperplane arrangements, as CL-arrangements are a natural generalization of arrangements of projective lines. 
{\it Addition-deletion} 
and {\it deletion-restriction}
 type results provide important inductive tools to approach various problems concerning arrangements of hyperplanes, for instance to study the freeness property. Among them, we mention the classical addition-deletion theorem of Terao (\cite{Terao}) and the more recent division theorem (\cite{A:Inv}) and  combinatorial deletion theorem  (\cite{A1})  of Abe. For curves, Schenck-Terao-Yoshinaga prove in  \cite{STY} an addition-type result,  that concerns the addition of a smooth curve, under some quasihomogeneity conditions.

An emblematic and long standing open problem in the theory of  arrangements of hyperplanes is Terao's Conjecture that predicts the combinatorial nature of the algebraic freeness property. Despite attracting considerable interest over the years, the conjecture is still open, even for arrangements of projective lines, i.e., in the $3$-dimensional case. This has generated an intense research on the topics of freeness and freeness-adjacent properties, such as the property of being plus-one generated, introduced in \cite{A:POG}. 

CL-arrangements too, and more generally, reduced curves in $\PP^2$,  were in recent years persistently studied in relation to the freeness property (see for instance \cite{ST, STY, AD, P}). Freeness for curves proves to be an even more complicated subject, compared to its manifestation in the field of hyperplane arrangements. As Schenck-Toh\u{a}neanu prove in \cite{ST}, freeness is not determined by the combinatorics of the curve (combinatorics of a curve as defined, for instance, in \cite{CM}). \\


In this note we investigate the effect, on the freeness property, of adding a smooth conic to a free reduced curve, respectively of deleting a smooth conic from a free reduced curve. Incidentally, these addition-deletion type results (Theorems  \ref{thm:del_conic_gen} and \ref{thm:add_conic_gen})  give an effective way to construct new free and plus-one generated examples of curves.

In particular we give an answer to an open question from \cite{MP1} about the type of CL-arrangement that can appear by deleting a smooth conic from a free  CL-arrangement. We formulate necessary and sufficient conditions for the resulting arrangement to be free or plus-one generated, in Theorem \ref{thm:del_conic}. 
Given a free CL-arrangement, under some quasihomogeneity assumptions, we show that one just has to count the points of intersection of the arrangement to the conic to decide if the result of the deletion of the conic from the arrangement is free, plus-one generated, or neither.  Moreover, we describe precisely the type of curve (from the point of view of a minimal resolution of its associated derivations module) we obtain in this third case, see Corollary \ref{cor:not_free_or_POG} and Remark  \ref{rem:del_case_3_detailed}.
  
As for the addition of a smooth conic to a CL-arrangement, the fact that it can produce a whole range of results, even in the quasihomogeneity hypothesis, was already known, and it is illustrated by \cite[Example 2.8]{MP1}. However, we give new insight on how the addition behaves, specifically we give the precise conditions for the result of the addition to be free,  plus-one generated or neither, see Theorem \ref{thm:add_conic}. We effectively describe a minimal resolution of the derivations module of the resulting curve in this third case, in Corollary \ref{cor:add_not_free_nPOG} and Remark \ref{rem:add_case_3_detailed}.

The main ingredients for the proofs of Theorems  \ref{thm:del_conic},  \ref{thm:add_conic} are found in Schenck-Toh\u{a}neanu's paper \cite{ST}. However, the authors are interested in   \cite{ST} only in the situation when the addition or deletion of a smooth conic applied to a free CL-arrangement results in a free CL-arrangement (see \cite[Theorem 3.4]{ST}), under a quasihomogeneity assumption. We take this a step further and we characterize completely the possible results to the addition or deletion of a smooth conic, starting from a free CL-arrangement. A more recent result from \cite{DS0}, Theorem \ref{thm: min_gens_sum}, also comes into play.

Moreover,  we show that the results concerning CL-arrangements, Theorems  \ref{thm:del_conic},  \ref{thm:add_conic} and Corollaries \ref{cor:comb_del}, \ref{cor:comb_add}  hold in a more general hypothesis, i.e.  for reduced curves, with or without the quasihomogeneity assumption. More  precisely, they hold for triples of reduced curves  with respect to a smooth conic (see Section \ref{sec:main} for the definition of a triple with respect to a smooth conic). For quasihomogeneous triples of reduced curves the statements are identical to the ones for quasihomogeneous triples of CL-arrangements. Dropping the quasihomogeneity assumption leads to some adjustments in the initial statements, see Theorems \ref{thm:del_conic_gen}, \ref{thm:add_conic_gen} and Theorem \ref{thm:cors_gen}.\\

To put the problem in context, we recall that recent results from \cite{Dimca} (see also \cite{MP2})  show that, when 
applied to a free CL-arrangement, any of the  two operations of deletion, respectively addition,  of a {\it line} from (respectively to) a CL-arrangement produces either a free or a  plus-one generated CL-arrangement.

 These types of results, but stated for arrangements of hyperplanes, appeared first in the work of  Abe (\cite{A:POG}), where the notion of {\it plus-one generated} was introduced. As seen in \cite{A:POG}, addition and deletion of a hyperplane behave perfectly symmetrical in a $3$-dimensional vector space:  both procedures give rise to a free or a plus-one generated arrangement. But in ambient spaces of higher dimension only the deletion works the same way, in full generality. The addition of a hyperplane to a free arrangement in spaces of dimension higher than $3$ can produce an arrangement which is neither free nor plus-one generated (see \cite[Example 7.5]{A:POG}).  In \cite{ADen}, Abe-Denham introduce,  for logarithmic forms,  an analogue notion to plus-one generated, {\it dual plus-one generated}, with respect to which the addition of a hyperplane to a hyperplane arrangement,  in arbitrary dimension, behaves nicely. That is, it produces as a result either a free or a dual plus-one generated arrangement. The two notions, plus-one generated and  dual plus-one generated,  are equivalent in a  $3$-dimensional vector space.\\
 
We present in Theorems \ref{thm:curve_line_comb} and  \ref{thm:cors_gen}
some geometric and combinatoric implications of the freeness property for reduced curves.
In particular, for a CL-arrangement, we obtain obstructions on its restrictions to lines and smooth conics and also obstructions on its intersection to lines and smooth conics that are not elements in the arrangement.

Notice that Theorem \ref{thm:curve_line_comb} is a generalization to free reduced curves of a result of Abe for free projective  line arrangements  from \cite{A2}  (recalled in Theorem \ref{thm:Abe_restr_free}), concerning the number of points of the restriction to a line of the arrangement, respectively the number of  points of the intersection of the arrangement to a line not contained in the arrangement.\\

 In Theorem \ref{thm:gen_restr_POG} we formulate similar geometric obstructions induced by the  plus-one generated property. With this we generalize a result on projective  line arrangements  from \cite{AIM}, recalled in Theorem \ref{thm:AIM_restr_POG}.\\
  
We conclude in Section \ref{sec:exp} with a series of examples that reflect the variety of situations from Theorems  \ref{thm:del_conic},  \ref{thm:add_conic} and their generalized versions, Theorems  \ref{thm:del_conic_gen} and \ref{thm:add_conic_gen}.

\section*{Acknowledgments}
I would like to thank Takuro Abe for clarifications on the notion of {\it dual plus-one generated} and duality in the realm of rank $2$ vector bundles. I would also like to thank Piotr Pokora for suggestions on a previous version of this preprint that led to a clearer exposition.

\section{Preliminaries}
\label{sec:prelim}

Let us briefly recall  the notion of  freeness for curves.
Denote $S:= \C[x,y,z]$ and let 
$$Der(S) \coloneqq S\partial_x \oplus S\partial_y  \oplus S\partial_z$$
 be the module of $S$-derivations.
To a reduced curve $\CC \subset \PP^2$, defined as the zero set of the  homogeneous polynomial $f_{\CC} \in S$,  one associates the module of logarithmic derivations
$$
D(\CC) \coloneqq \{\theta \in Der(S) \; |\; \theta(f_{\CC}) \in S f_{\CC}\}.
$$
$\CC$ is called {\it free} if $D(\CC)$ is free as an $S$-module. The degrees of a minimal set of homogeneous generators of $D(\CC)$ (which are independent of the choice of the generators themselves) constitute the {\it exponents} of the free curve $\CC$. 
The Euler derivation $\theta_E = x \partial_x + y \partial_y + z \partial_z$ is always an element of $D(\CC)$, so $1$ is always among the exponents.

Moreover, there is a decomposition of  the module of logarithmic derivations  as a direct sum of modules 
$$
D(\CC) = D_0(\CC) \oplus S \theta_E
$$
where 
$$ 
D_0(\CC) \coloneqq \{\theta \in Der(S) \; |\; \theta(f_{\CC}) = 0\}.
$$

\noindent So $\CC$ is free if and only if the $S$-module $D_0(\CC)$ is free. If $\CC$ is free with exponents $(1,a,b)$ then $D_0(\CC)$ is freely generated by two homogeneous derivations of degrees $a,b$. We will omit the exponent of $\CC$ corresponding to the Euler derivation and simply state that $\CC$ is free with exponents $(a,b)$.

In \cite[Definition 1.1]{A:POG} Abe introduces, for hyperplane arrangements in general, a notion close to and strongly related to freeness. A {\it plus-one generated} arrangement of hyperplanes is characterized by the existence of  a very simple resolution for the associated module of logarithmic derivations, a natural step away from the resolution of a free module. Later on,  in \cite{DS0}, Dimca-Sticlaru extend this definition to reduced plane projective curves, via their associated derivations modules:

\begin{definition}
\label{def:POG}
A reduced curve $\CC$ in $\PP^2$ is called plus-one generated with exponents $(a,b)$ and level $\ell$ if $D_0(\CC)$ admits a minimal resolution of the form:
$$0 \rightarrow  S(-\ell-1) \rightarrow  S(-\ell) \oplus S(-b) \oplus S(-a) \rightarrow D_0(\CC) \rightarrow 0$$
\end{definition}

This definition of the plus-one generated property is at first glance slightly different  from the definition given by the authors in \cite{DS0}. We clarify in Proposition \ref{prop:equiv_POG_def} that the two definitions actually coincide.\\

Let us recall some general facts about the derivations module $D_0(\CC)$ associated to a reduced degree $\mathit{d}$ curve $\CC \subset \PP^2$. It is a rank $2$, graded, reflexive, $S$-module, of projective dimension $pd_S(D_0(\CC)) \leq 2$, so  a minimal resolution is of type:

\begin{equation}
\label{eq: res_gen_deriv}
 0 \rightarrow  \bigoplus_{i=1}^{r-2} S[-e_i] \rightarrow  \bigoplus_{i=1}^{r} S[-d_i] \rightarrow D_0(\CC) \rightarrow 0
\end{equation}

Following  the convention established in \cite{DS0} we call the ordered multiset 
\begin{equation}
\label{eq: gen_exp}
(d_1 \leq \dots \leq d_r)
\end{equation}
{\it the (generalized) exponents} of $\CC$, where $d_1 \geq 1$. These are the ordered degrees of the  generators from a minimal set of homogeneous generators of $D_0(\CC)$, and do not depend on the choice of the generators.\\
 
 With this notion of generalized exponents defined for any reduced curve, let us prove that a plus-one generated curve $\CC$ with exponents $(a,b)$ and level $\ell$ as in  Definition \ref{def:POG} means precisely a curve with generalized exponents $(a,b,\ell)$ such that $a+b=\deg(\CC)$. This latter characterisation of the plus-one generated property was employed as definition in \cite{DS0}.
 
\begin{prop}
\label{prop:equiv_POG_def}
A curve $\CC$ is  plus-one generated with exponents $(a,b)$ and level $\ell$ if and only if $\CC$ has generalized exponents $(a,b,\ell)$ and $a+b=\deg(\CC)$.
\end{prop}

\begin{proof}
 Let $\CC$ be plus-one generated. Denote by  $\cE_{\CC}$ the sheafification of the $S$-module of logarithmic derivations $D_0(\CC)$. It is known that $\cE_{\CC}$ is in fact a locally free sheaf, i.e. a vector bundle.
Consider the exact sequence of vector bundles induced by the sheafification of the exact sequence of graded modules from Definition \ref{def:POG}:
\begin{equation}
\label{eq: sheaves_sequence_def_POG}
 0 \rightarrow  \cO_{\PP^2}(-1-\ell) \longrightarrow \cO_{\PP^2}(-\ell)\oplus \cO_{\PP^2}(-b)\oplus \cO_{\PP^2}(-a) \rightarrow \cE_{\CC}  \rightarrow 0
 \end{equation}
There is a well known formula that connects the Chern polynomials of the terms of an exact sequence of vector bundles, see for instance  \cite[Section 3]{F}. Using this formula we compute the first Chern number of $\cE_{\CC}$,   
 $$c_1(\cE_{\CC}) = 1-a-b.$$
Since, for reduced curves in general, $c_1(\cE_{\CC}) = 1 - \deg(\CC)$, it follows that the plus-one generated property implies
$$a+b=\deg(\CC).$$
Conversely, let us assume that $\CC$ is  a curve with generalized exponents $(a,b,\ell)$ such that $a+b=\deg(\CC)$. A minimal free resolution for $D_0(\CC)$ is of the following type:

$$0 \rightarrow  S(-e) \rightarrow  S(-\ell) \oplus S(-b) \oplus S(-a) \rightarrow D_0(\CC) \rightarrow 0$$

Just as before, pass to sheaves in the above exact sequence to compute $c_1(\cE_{\CC})$ and then use the equality $a+b=\deg(\CC)$ to conclude $$e = \ell+1.$$
\end{proof}

A very useful criterion to decide when a curve $\CC$ is free or plus-one generated involves the generalized exponents from \ref{eq: gen_exp}, see \cite[Theorem 2.3]{DS0}:

\begin{theorem}
\label{thm: min_gens_sum}
Let  $\CC \subset \PP^2$ be a reduced curve of degree $\mathit{d}$. Then, with the notations from \eqref{eq: res_gen_deriv}:
\begin{enumerate}
\item $\CC$ is free if and only if $d_1 + d_2 = d-1$.
\item $\CC$ is plus-one generated if and only if $d_1 + d_2 = d$.
\item $\CC$ is neither free nor plus-one generated if and only if $d_1 + d_2 > d$.
\end{enumerate}
\end{theorem}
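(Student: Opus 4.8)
The plan is to recast the statement in terms of the rank $2$ vector bundle $\cE_\CC$ on $\PP^2$ obtained by sheafifying the minimal resolution \eqref{eq: res_gen_deriv}, and then to analyse $D_0(\CC)$ through a minimal global section of $\cE_\CC$, so that all three cases become governed by one invariant: the least degree of a plane curve passing through the zero scheme of that section.

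I would begin with the Chern-class bookkeeping: sheafifying \eqref{eq: res_gen_deriv} and comparing first Chern classes — using $c_1(\cE_\CC)=1-d$, by the same computation as done above for plus-one generated curves — gives $\sum_{i=1}^{r} d_i-\sum_{i=1}^{r-2} e_i=d-1$. If $D_0(\CC)$ is generated by two elements this reads $d_1+d_2=d-1$, the easy half of (1). For the rest, recall that $D_0(\CC)$, being reflexive, has depth $\ge2$ and hence is saturated, so $D_0(\CC)=\bigoplus_k H^0(\cE_\CC(k))$; choose a nonzero $s\in H^0(\cE_\CC(d_1))$. By minimality of $d_1$, the section $s$ does not vanish along a divisor, so its zero scheme $Z\subset\PP^2$ is $0$-dimensional. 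If $Z=\varnothing$, then $\cO(-d_1)\xrightarrow{s}\cE_\CC$ is a subbundle with invertible quotient; the extension splits because line bundles on $\PP^2$ have vanishing $H^1$, so $\cE_\CC\cong\cO(-d_1)\oplus\cO(d_1-d+1)$ and $\CC$ is free with $d_1+d_2=d-1$. If $Z\ne\varnothing$, the Hartshorne--Serre sequence
$$0\to\cO(-d_1)\xrightarrow{s}\cE_\CC\longrightarrow\mathcal I_Z(d_1-d+1)\to0$$
gives, on passing to the graded module of global sections and using that $\mathcal I_Z$ is saturated, a graded isomorphism $D_0(\CC)/(S\cdot s)\cong I_Z(d_1-d+1)$, with $I_Z$ the saturated ideal of $Z$.

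From there I would read off the exponents. Since $D_0(\CC)$ vanishes below degree $d_1$ while $S\cdot s$ meets degree $d_1$ only in $\C s$, the image in $D_0(\CC)/(S\cdot s)$ of any minimal generator of $D_0(\CC)$ other than $s$ is again a minimal generator; hence $d_2$ is the least generator degree of $I_Z(d_1-d+1)$, namely $d_2=\delta+(d-1)-d_1$, where $\delta$ is the least degree of a plane curve through $Z$. Thus $d_1+d_2=(d-1)+\delta$. As $Z\ne\varnothing$ forces $\delta\ge1$, we obtain $d_1+d_2\ge d$, hence $d_1+d_2\ge d-1$ always, with equality forcing $Z=\varnothing$ and thus freeness; this finishes (1). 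For (2) and (3), if $\delta=1$ then $Z$ lies on a line, $I_Z$ is a complete intersection of type $(1,|Z|)$, and substituting its Koszul resolution into a mapping cone over $0\to S(-d_1)\to S(-d_1)\to0$ yields — once one checks, by a short degree count using $d_1+d_2=d$ and $|Z|\ge1$, that the connecting map has entries in the maximal ideal — a minimal resolution of $D_0(\CC)$ with three generators and a single syzygy, the syzygy being in degree one more than the largest exponent, so $\CC$ is plus-one generated. If $\delta\ge2$, then $d_1+d_2>d$, so $\CC$ is not free by (1), and it is not plus-one generated either: a plus-one generated curve is non-free with generator degrees $a,b,\ell$ satisfying $a+b=d$ (shown above), so $d_1+d_2\le a+b=d$, while by (1) a non-free curve has $d_1+d_2\ge d$. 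This establishes the three equivalences.

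I expect the technical heart to be the isomorphism $D_0(\CC)/(S\cdot s)\cong I_Z(d_1-d+1)$ and the ensuing formula $d_2=\delta+(d-1)-d_1$: one must use that $\mathcal I_Z$ is saturated so that global sections recover $I_Z$ exactly, and one must treat separately the boundary case $d_1=d_2$, in which $s$ is not the unique minimal generator of its degree, and verify that the formula for $d_2$ still holds there. Granting that, the splitting of $\cE_\CC$ when $Z=\varnothing$ and the complete-intersection structure of $I_Z$ when $Z$ lies on a line are exactly what isolate the free and plus-one generated cases.
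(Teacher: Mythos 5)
The paper itself offers no proof of this statement: Theorem \ref{thm: min_gens_sum} is quoted from Dimca--Sticlaru \cite{DS0}, so the only comparison available is with the standard argument in the literature, which your proposal essentially reconstructs (it is, in substance, the du Plessis--Wall/Dimca--Sticlaru analysis of $D_0(\CC)$ via a minimal-degree section of the bundle $\cE_{\CC}$). Your proof is correct. The key identity $d_1+d_2=(d-1)+\delta$, obtained from the Koszul sequence of a minimal-degree section $s$ and the graded isomorphism $D_0(\CC)/(S\cdot s)\cong I_Z(d_1-d+1)$, is the right numerical reformulation, and the two points you flag as the technical heart do check out: since $(D_0(\CC))_{<d_1}=0$, the class of $s$ in $D_0(\CC)\otimes_S\C$ is nonzero even when $d_1=d_2$, so the generator degrees of the quotient are $d_2\le\cdots\le d_r$ and the formula for $d_2$ survives the boundary case; and in the $\delta=1$ case the three entries of the connecting map in the mapping cone have degrees $1$, $|Z|$ and $|Z|+d_2-d_1$, all positive, so the cone is a minimal resolution of exactly the shape required by Definition \ref{def:POG}, with $a=d_1$, $b=d-d_1$ and level $\ell=|Z|+d-d_1-1\ge b$. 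Two small things to make explicit when writing this up: $|Z|$ must be read as the length (scheme-theoretic degree) of the zero scheme $Z$, not the number of its reduced points; and $\delta=1$ must be interpreted as the existence of a linear form in the saturated ideal $I_Z$, i.e. scheme-theoretic containment of $Z$ in a line, which is what guarantees that $I_Z=(l,g)$ is a complete intersection of type $(1,|Z|)$. With those clarifications the three equivalences follow exactly as you state them.
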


For arrangements of lines in $\PP^2$, freeness imposes constraints on the number of points of the restriction to an arbitrary line in the arrangement, and also on the number of  intersection points of the arrangement to a an arbitrary line not contained in the arrangement.

\begin{theorem}(\cite{A2})
\label{thm:Abe_restr_free}
Let $\A$ be a free arrangement in $\PP^2$ with exponents $(a,b)$, $a \leq b$.
\begin{enumerate}
\item Let  $H \in \A$ and $\A' \coloneqq \A \setminus \{H\} $. Then either $|\A' \cap H| \leq a+1$ or $|\A' \cap H| = b+1$. 
\item Let  $L \subset \PP^2$ be a line such that  $L \notin \A$. Then $|\A \cap L| = a+1$ or $|\A \cap L| \geq  b+1$.
\end{enumerate}
\end{theorem}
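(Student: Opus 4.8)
The plan is to argue through the logarithmic tangent sheaves attached to the arrangements, using that for a \emph{free} arrangement this sheaf is a direct sum of two line bundles on $\PP^2$, and to feed the deletion–restriction short exact sequence of these sheaves into an elementary lemma about line bundles on $\PP^1$; both parts of the statement then fall out of the same computation.

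First recall the sheaf input. For a line arrangement $\cB\subset\PP^2$ the sheaf $\cE_\cB$ (the sheafification of $D_0(\cB)$) is locally free, because $\cB$ has only ordinary singularities. If $H\in\cB$, $\cB':=\cB\setminus\{H\}$ and $m:=|\cB^H|=|\cB'\cap H|$, there is a short exact sequence
\[
0 \longrightarrow \cE_\cB \longrightarrow \cE_{\cB'} \longrightarrow \iota_{H*}\,\cO_H\!\left(m-|\cB|+1\right) \longrightarrow 0,
\]
where $\iota_H\colon H\hookrightarrow\PP^2$; the twist is fixed by comparing Chern classes, equivalently by computing $\chi(\cB',t)$ from $\chi(\cB^H,t)=(t-1)(t-(m-1))$ and the deletion–restriction identity $\chi(\cB,t)=\chi(\cB',t)-\chi(\cB^H,t)$ (the constant term of the quadratic factor of $\chi(\cdot,t)/(t-1)$ being exactly the second Chern number of the corresponding sheaf). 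Since $\cE_{\cB'}$ is locally free, $\mathcal{E}xt^1(\cE_{\cB'},\cO)=0$, so dualizing gives
\[
0 \longrightarrow \cE_{\cB'}^{\vee} \longrightarrow \cE_{\cB}^{\vee} \longrightarrow \iota_{H*}\,\cO_H\!\left(|\cB|-m\right) \longrightarrow 0.
\]

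Now specialize. For (1) put $\cB=\A$ (so $\cB'=\A'$ and $m=|\A'\cap H|$): since $\A$ is free, $\cE_\A^{\vee}\cong\cO_{\PP^2}(a)\oplus\cO_{\PP^2}(b)$, and the dualized sequence provides a surjection onto $\iota_{H*}\cO_H(|\A|-m)$; restricting it to $H\cong\PP^1$ and using $|\A|=a+b+1$ yields a surjection $\cO_{\PP^1}(a)\oplus\cO_{\PP^1}(b)\twoheadrightarrow\cO_{\PP^1}(a+b+1-m)$. For (2) put $\cB=\widetilde\A:=\A\cup\{L\}$ and $H:=L$ (so $\cB'=\A$ and $m=|\A\cap L|$): here the free arrangement $\A$ sits in the \emph{middle} of the undualized sequence, so one gets directly a surjection $\cE_\A\twoheadrightarrow\iota_{L*}\cO_L(|\A\cap L|-|\widetilde\A|+1)$; restricting to $L$ and using $\cE_\A|_L\cong\cO_L(-a)\oplus\cO_L(-b)$ together with $|\widetilde\A|=a+b+2$ yields a surjection $\cO_{\PP^1}(-b)\oplus\cO_{\PP^1}(-a)\twoheadrightarrow\cO_{\PP^1}(|\A\cap L|-a-b-1)$.

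The final ingredient is the elementary fact that on $\PP^1$ a morphism $\cO(s_1)\oplus\cO(s_2)\to\cO(s)$ with $s_1\le s_2$ can be surjective only if $s=s_1$ or $s\ge s_2$ (otherwise a common zero of the relevant sections is forced). Applying this: in case (1), either $a+b+1-m=a$, i.e.\ $|\A'\cap H|=b+1$, or $a+b+1-m\ge b$, i.e.\ $|\A'\cap H|\le a+1$; in case (2), either $|\A\cap L|-a-b-1=-b$, i.e.\ $|\A\cap L|=a+1$, or $|\A\cap L|-a-b-1\ge-a$, i.e.\ $|\A\cap L|\ge b+1$. These are exactly the two assertions. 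The step I expect to be the real work is the sheaf input above: producing the deletion–restriction sequence with the \emph{correct} twist and verifying the local freeness used to dualize; everything downstream is formal. A purely module-theoretic variant is also available — analyze the minimal free resolution of $D_0(\A')$, bound its generalized exponents $d_1\le d_2$ under addition of a single line, and invoke Theorem~\ref{thm: min_gens_sum} — and it is this avenue that extends to the reduced-curve setting treated later in the paper.
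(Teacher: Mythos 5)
Your argument is correct. The paper itself gives no proof of this statement --- it is imported verbatim from Abe's paper \cite{A2}, whose original argument runs through characteristic polynomials and Ziegler restrictions of multiarrangements --- so the right comparison is with the techniques the paper does use: your route (deletion--restriction sequence of logarithmic bundles, dualize, restrict to the line, then the elementary surjectivity criterion $\cO_{\PP^1}(s_1)\oplus\cO_{\PP^1}(s_2)\twoheadrightarrow\cO_{\PP^1}(s)\Rightarrow s=s_1$ or $s\ge s_2$) is exactly the strategy the author employs for the plus-one generated analogue, Theorem \ref{thm:gen_restr_POG}, via \cite[Theorem 2.3]{Dimca} and \cite[Proposition 2.5]{MV}. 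I checked the details: your sequence $0\to\cE_{\cB}\to\cE_{\cB'}\to\iota_{H*}\cO_H(m-|\cB|+1)\to 0$ is the correct dual-twisted form of the standard sequence $0\to\cE_{\cB'}(-1)\to\cE_{\cB}\to\iota_{H*}\cO_H(1-m)\to 0$, the $\mathcal{E}xt^1$ computation giving $\iota_{H*}\cO_H(|\cB|-m)$ is right, and both specializations land on the stated dichotomies. Two small points. First, $\cE_{\cB}$ is locally free not ``because $\cB$ has only ordinary singularities'' but because it is a rank-two reflexive sheaf on a smooth surface; the nature of the singularities is irrelevant here. Second, the genuinely nontrivial input is not the twist (which Chern classes do fix) but the fact that the cokernel of $\cE_{\cB}\to\cE_{\cB'}$ is a line bundle on $H$ with no extra torsion; this is precisely \cite[Theorem 1.6]{STY} (applicable since ordinary multiple points of line arrangements are quasihomogeneous), and you correctly flag it as the real work without reproving it --- which is consistent with how the paper treats the same sequence.
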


We generalize the above result, proving that similar obstructions hold for the geometry of free reduced curves in $\PP^2$,  in Theorem \ref{thm:curve_line_comb}.\\

Likewise, it is known that the plus-one generated property imposes constraints on the cardinal of the restriction to an arbitrary line in the arrangement.

\begin{theorem}(\cite{AIM})
\label{thm:AIM_restr_POG}
Let $\A$ be a plus-one generated arrangement in $\PP^2$ with exponents $(a,b), \; a \leq b,$ and level $l$,  $H \in \A$ arbitrary and $\A' \coloneqq \A \setminus \{H\} $. Then either $|\A' \cap H| \leq a+1$ or $|\A' \cap H| \in \{b,b+1, l+1\}$.
\end{theorem}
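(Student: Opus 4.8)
The plan is to read off the bound on $m:=|\A'\cap H|$ from the splitting type of the rank-two logarithmic bundle $\cE_\A:=\widetilde{D_0(\A)}$ along the line $H$, imitating the proof of Theorem~\ref{thm:Abe_restr_free} and keeping track of the extra ``level'' datum carried by the plus-one generated resolution.

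First I would reduce to the restriction to $H$. For $\A$ plus-one generated we have $a+b=d:=|\A|$ (Theorem~\ref{thm: min_gens_sum}, resp.\ the computation in Section~\ref{sec:prelim}), and sheafifying the resolution of Definition~\ref{def:POG} gives
\[
0\to\cO_{\PP^2}(-\ell-1)\xrightarrow{(h_3,h_2,h_1)}\cO_{\PP^2}(-\ell)\oplus\cO_{\PP^2}(-b)\oplus\cO_{\PP^2}(-a)\to\cE_\A\to 0,
\]
where $h_3$ is a nonzero linear form: if $h_3=0$ then $\cO_{\PP^2}(-\ell)$ is a direct summand of $\cE_\A$, whence $\cE_\A$ is a sum of line bundles and $\A$ is free, contrary to hypothesis. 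Since $\cE_\A$ is locally free, restriction to a line $L\cong\PP^1$ keeps the sequence exact, so $\cE_\A|_L\cong\cO_L(-p)\oplus\cO_L(-q)$ with $p\le q$ and $p+q=d-1$. The mechanism behind Theorem~\ref{thm:Abe_restr_free} --- comparing $\cE_\A|_H$ with the Ziegler multirestriction of $\A$ along $H$, a free rank-two multiarrangement of total exponent $d-1$ --- then gives, for $H\in\A$, the dichotomy
\[
m\le p+1\qquad\text{or}\qquad m=q+1,
\]
where $(p\le q)$ is the splitting type of $\cE_\A$ on $H$. So it suffices to determine the splitting types of $\cE_\A$ along the lines of $\A$.

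Next I would locate the ``level part'' of $\cE_\A$. Put $L_0:=V(h_3)$. Restricting the resolution to $L_0$ kills the first entry of the map, so $\cO_{L_0}(-\ell)$ splits off; as the restricted cokernel is locally free on $\PP^1$ the remaining summand is a line bundle, and a degree count gives $\cE_\A|_{L_0}\cong\cO_{L_0}(-\ell)\oplus\cO_{L_0}(\ell+1-d)$. On the other hand a minimal-degree generator $\theta_a\in D_0(\A)_a$ is a nowhere-vanishing section of $\cE_\A(a)$ (minimality forbids it to vanish along a curve), giving a subbundle $\cO(-a)\hookrightarrow\cE_\A$ with $\cE_\A/\cO(-a)\cong\II_Z(1-b)$ for a zero-dimensional $Z$, and a Chern-class computation yields $\deg Z=\ell-b+1$. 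Restricting $0\to\cO(-a)\to\cE_\A\to\II_Z(1-b)\to 0$ to a line $L$ --- the relevant $\mathrm{Tor}$ vanishes, and the resulting extension on $\PP^1$ splits since $a\le b$ --- shows that the splitting type of $\cE_\A$ on $L$ is $(a-\mu,\,b-1+\mu)$ with $\mu:=\operatorname{length}(Z\cap L)$. Comparing with the type found on $L=L_0$ forces $\operatorname{length}(Z\cap L_0)=\ell-b+1=\deg Z$, hence $Z\subseteq L_0$. Therefore, for $H\in\A$: if $H=L_0$ then $\mu=\ell-b+1$ and $(p,q)=(d-1-\ell,\ell)$; and if $H\ne L_0$ then $Z\cap H\subseteq Z\cap(L_0\cap H)$ is a subscheme of a single reduced point, so $\mu\in\{0,1\}$ and $(p,q)$ is $(a,b-1)$ or $(a-1,b)$.

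Finally, inserting these three possibilities into the dichotomy: for $H\ne L_0$ one obtains $m\le a+1$ or $m\in\{b,b+1\}$, while for $H=L_0$ one obtains $m\le d-\ell$ (which forces $m\le a+1$, as $\ell\ge b$) or $m=\ell+1$; the union is precisely $|\A'\cap H|\le a+1$ or $|\A'\cap H|\in\{b,b+1,\ell+1\}$. The step I expect to be the real obstacle is the dichotomy $m\le p+1$ or $m=q+1$: the Chern-class and splitting-type computations are routine, but this assertion genuinely couples the vector bundle to the combinatorics of the $m$ points of $\A'\cap H$, and establishing it is where one must bring in a curve-analogue of Ziegler's restriction theorem --- the same ingredient underlying Theorem~\ref{thm:Abe_restr_free}. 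A secondary technical nuisance is the torsion bookkeeping when $0\to\cO(-a)\to\cE_\A\to\II_Z(1-b)\to 0$ is restricted to a line through $Z$.
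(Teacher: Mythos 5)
The paper does not prove this statement (it is quoted from \cite{AIM}); the closest in-house argument is the proof of its curve-level generalization, Theorem~\ref{thm:gen_restr_POG}, and your proposal follows essentially that same route: restrict $\cE_{\A}$ to $H$, determine the possible splitting types for a plus-one generated arrangement, and read off the constraint on $m=|\A'\cap H|$ from a surjection onto $\cO_{L}(1-m)$. The genuine difference is step two: you re-derive the splitting-type classification $\{(a,b-1),\,(a-1,b),\,(a+b-1-\ell,\ell)\}$ directly from the resolution, via the jumping line $L_0=V(h_3)$ and the zero scheme $Z$ of a minimal-degree derivation, whereas the paper imports it from \cite[Theorem 2.8]{MP2}; your derivation is correct and buys a self-contained proof. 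Two caveats. First, the dichotomy $m\le p+1$ or $m=q+1$, which you flag as the main obstacle and leave as a black box, is precisely what the paper extracts from \cite[Theorem 2.3]{Dimca} (or \cite[Theorem 1.6]{STY}) together with the argument of \cite[Proposition 2.5]{MV}, and it is short once you have the exact sequence $0\to\cE_{\A'}(-1)\to\cE_{\A}\to\cO_H(1-m)\to 0$ (available here with no correction term, since all singularities of a line arrangement are quasihomogeneous): restricting to $H$ yields a surjection $\cO_H(-p)\oplus\cO_H(-q)\twoheadrightarrow\cO_H(1-m)$ whose kernel is a saturated sub-line-bundle of degree $m-1-p-q$, and saturated sub-line-bundles of $\cO_H(-p)\oplus\cO_H(-q)$ with $p\le q$ have degree either exactly $-p$ or at most $-q$ --- which is the dichotomy. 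Without this supplied, the proof is incomplete at its central step. Second, your deduction that $Z\subseteq L_0$ breaks down in the boundary case $a=b=\ell$, where $\deg Z=1$ and the two candidate splitting types on $L_0$ coincide as multisets, so the comparison forces nothing; this is harmless (every line then has type $(a-1,a)$ and $m\le a+1=\ell+1$ regardless), but the claim needs that caveat. Finally, ``nowhere-vanishing section'' should read ``section vanishing in codimension two'': your own computation $\deg Z=\ell-b+1\ge 1$ shows $Z$ is nonempty whenever $\ell\ge b$.
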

 
We generalize Theorem \ref{thm:AIM_restr_POG}, showing that  the plus-one generated property for curves induces the same type of geometric obstructions as for projective line arrangements, in Theorem \ref{thm:gen_restr_POG}.\\

To state and prove the above mentioned results, Theorems \ref{thm:curve_line_comb} and  \ref{thm:gen_restr_POG},  we need to recall, in our particular setting,  the definition of an invariant associated to an  isolated hypersurface singularity,  introduced in \cite{Dimca}.\\

  Let  $\CC$ be an arbitrary reduced curve in $\PP^2$ and $p \in Sing(\CC)$. Denote by $\mu (\CC, p),\; \tau (\CC, p)$, the Milnor number, respectively, the Tjurina number, of the singularity $p$.  Let 
 $$
 \epsilon(\CC, p) \coloneqq \mu (\CC, p) - \tau (\CC, p)
 $$
be the measure of the defect from quasihomogeneity of $p \in Sing(\CC)$
(it is known that $p$ is quasihomogeneous if and only if the Milnor and Tjurina numbers are equal, i.e. $p$ quasihomogeneous is equivalent to $ \epsilon(\CC, p)=0$, see \cite{Saito}). In general, the inequality 
$\mu (\CC, p) \geq \tau (\CC, p)$ holds, i.e.
$$ \epsilon(\CC, p) \geq 0.$$

If  $\CC_1, \; \CC_2$ are two reduced curves  with no common irreducible components and  $p \in \CC_1 \cap  \CC_2$, denote 
$$
 \epsilon(\CC_1, \CC_2)_p \coloneqq  \epsilon(\CC_1 \cup  \CC_2,p) -  \epsilon(\CC_1,p)
$$
and 
$$
 \epsilon(\CC_1, \CC_2) \coloneqq \sum_{p \in \CC_1 \cap  \CC_2}  \epsilon(\CC_1, \CC_2)_p.
$$

\begin{theorem}
\label{thm:curve_line_comb}
Let $\CC$ be a reduced free curve in $\PP^2$ with exponents $(a,b), a \leq b$. 
\begin{enumerate}
\item  Let $L$ in $\PP^2$ be a line which is an irreducible component of $\CC$, i.e. $\CC = \CC' \cup L$, where $\CC'$ is the union of the irreducible components of $\CC$, other than $L$. Then either $|\CC' \cap L| \leq  a+1 - \epsilon(\CC',L)$ or  $|\CC' \cap L| = b+1 - \epsilon(\CC',L)$.
\item Let $L$ in $\PP^2$ be a  line which is not an irreducible component of $\CC$. Then either $|\CC \cap L| = a+1- \epsilon(\CC,L)$ or  $|\CC \cap L| \geq  b+1- \epsilon(\CC,L)$.
\end{enumerate}
\end{theorem}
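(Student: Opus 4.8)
The plan is to deduce both statements from the addition-deletion theorem for a line applied to a free reduced curve (\cite{Dimca}, see also \cite{MP2}), combined with the numerical criterion of Theorem~\ref{thm: min_gens_sum}.

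For part (1), write $\CC = \CC' \cup L$ and let $\ell_L$ be a linear form defining $L$, so $f_{\CC} = \ell_L\, f_{\CC'}$. The first step is to pin down the smallest generalized exponent $d_1(\CC')$. For homogeneous $\theta \in D_0(\CC')$ one checks that $\ell_L\,\theta - \tfrac{1}{\deg \CC}\,\theta(\ell_L)\,\theta_E$ lies in $D_0(\CC)$ and has degree $\deg\theta + 1$; conversely, for homogeneous $\theta \in D_0(\CC)$ the identity $\theta(f_{\CC}) = 0$ together with the coprimality of $\ell_L$ and $f_{\CC'}$ forces $\ell_L \mid \theta(\ell_L)$, and writing $\theta(\ell_L) = \ell_L\, g$ the derivation $\theta + \tfrac{1}{\deg \CC'}\,g\,\theta_E$ lies in $D_0(\CC')$ and has degree $\deg\theta$. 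Feeding minimal generators through these constructions — and noting the outputs are nonzero — gives $d_1(\CC') \in \{a-1,\, a\}$.

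Now invoke the line deletion theorem: since $\CC$ is free, $\CC'$ is free or plus-one generated, and — this is the quantitative content imported from \cite{Dimca} — its remaining generalized exponent is governed by the corrected intersection number $n := |\CC'\cap L| + \epsilon(\CC',L)$. Using $\deg\CC' = a+b$ and Theorem~\ref{thm: min_gens_sum}, the only possibilities for $\big(d_1(\CC'), d_2(\CC')\big)$ are $(a,b-1)$ (free) or $(a,b)$ (plus-one generated) when $d_1(\CC') = a$, and $(a-1,b)$ (free) or $(a-1,b+1)$ (plus-one generated) when $d_1(\CC') = a-1$. By the quantitative deletion theorem each possibility forces either $n = b+1$ (the cases with a drop $d_1(\CC') = a-1$, and the boundary plus-one generated case $d_1(\CC') = a$, $d_2(\CC') = b$) or $n \le a+1$ (the remaining cases); substituting back $n = |\CC'\cap L| + \epsilon(\CC',L)$ gives the dichotomy $|\CC'\cap L| = b+1 - \epsilon(\CC',L)$ or $|\CC'\cap L| \le a+1 - \epsilon(\CC',L)$.

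Part (2) is handled by the same argument on the addition side: for $L$ not a component of $\CC$, pass to $\CC\cup L$, which by the line addition theorem is again free or plus-one generated; the two constructions above now give $d_1(\CC\cup L) \in \{a, a+1\}$, and Theorem~\ref{thm: min_gens_sum} together with the quantitative addition theorem yields the stated alternatives for $|\CC\cap L| + \epsilon(\CC,L)$, the second coming out as the inequality $|\CC\cap L| \ge b+1 - \epsilon(\CC,L)$ because addition is less rigid than deletion. The main obstacle is exactly this quantitative input from \cite{Dimca}: one must know that the relevant generalized exponent is governed by $|\CC'\cap L|$ corrected by the \emph{local} defects $\epsilon(\CC',L)_p$ and that these assemble into the global term $\epsilon(\CC',L)$ of the statement; granted that bookkeeping the case analysis is routine, and the fully degenerate situation where $\CC$ is a pencil of lines through a point (so $a = 0$) is checked directly and is consistent with both alternatives.
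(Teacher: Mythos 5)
The paper's proof is a direct citation of \cite[Theorem 1.4]{Dimca} and \cite[Theorem 3.2]{MP2}, which already contain the precise relation between $n:=|\CC'\cap L|+\epsilon(\CC',L)$ and the exponents. Your argument ultimately rests on the same citation: the sentence ``by the quantitative deletion theorem each possibility forces either $n=b+1$ or $n\le a+1$'' is exactly the statement to be proven, imported as an unspecified black box, so the surrounding scaffolding (the verification that $d_1(\CC')\in\{a-1,a\}$, the list of admissible pairs via Theorem~\ref{thm: min_gens_sum}) does no logical work. Moreover, the case bookkeeping you supply is partly wrong. From the exact sequence $0\to\cE_{\CC'}(-1)\to\cE_{\CC}\to i_*\cO_L(1-n)\to 0$ one computes $c_2(\cE_{\CC'})=ab+1-n$, while for a plus-one generated curve with exponents $(d_1',d_2')$ and level $\ell$ one has $c_2=d_1'd_2'-(d_1'+d_2')+\ell+1$. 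For the outcome $(d_1',d_2')=(a,b)$ --- which is the plus-one generated outcome that actually occurs under line deletion, by \cite{Dimca} --- this gives $n=a+b-\ell\le a$ since $\ell\ge b$; so this case belongs to the branch $n\le a+1$, not to $n=b+1$ as you assert. (Compare the near-pencil check: deleting the generic line from a near-pencil, with exponents dropping from $(1,n-1)$ to $(0,n-1)$, corresponds to $n=b+1$, whereas deleting a concurrent line, exponents $(1,n-2)$, corresponds to $n=a+1$.) Your placement of the hypothetical exponents $(a-1,b+1)$ in the branch $n\le a+1$ is likewise unsupported.

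If you want an argument that is not circular, the efficient route --- the one behind \cite[Theorem 3.2]{MP2} and \cite[Proposition 2.5]{MV}, and the one the paper itself uses in the proof of Theorem~\ref{thm:gen_restr_POG} --- bypasses the free/plus-one generated classification of $\CC'$ entirely. The sequence above gives a surjection $\cE_{\CC}\to i_*\cO_L(1-n)$; tensoring with $\cO_L$ and using freeness, $\cE_{\CC}\otimes\cO_L\simeq\cO_L(-a)\oplus\cO_L(-b)$, one needs a surjection $\cO_L(-a)\oplus\cO_L(-b)\to\cO_L(1-n)$. Either the summand $\cO_L(-a)$ maps nontrivially, which requires $1-n+a\ge 0$, i.e.\ $n\le a+1$; or it maps by zero and $\cO_L(-b)\to\cO_L(1-n)$ must surject alone, hence be an isomorphism, forcing $n=b+1$. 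Part (2) follows analogously from the sequence for the pair $(\CC\cup L,\CC)$. Your explicit derivation constructions showing $d_1(\CC')\in\{a-1,a\}$ are correct but not needed for either part.
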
 

\begin{proof}
Immediately from \cite[Theorem 1.4]{Dimca} and \cite[Theorem 3.2]{MP2}.
\end{proof}
 
\begin{remark}
We formulate in Theorem \ref{thm:cors_gen}  geometric obstructions of the same flavour for free reduced curves, involving the restriction of the curve to one of its smooth conics (as irreducible component), respectively the intersection of the curve to an arbitrary smooth conic which is not an irreducible component of the curve.
\end{remark}

\begin{theorem}
\label{thm:gen_restr_POG}
Let $\CC$ be a plus-one generated curve with exponents $(a,b), \; a \leq b,$ and level $l$, and $L \subset \PP^2$ a line which is an irreducible component of $\CC$. Let $\CC'$ be the union of the irreducible components of $\CC$, other than $L$. Then either $|\CC' \cap L| + \epsilon(\CC', L)  \leq a+1$ or $|\CC' \cap L| + \epsilon(\CC', L)  \in \{b,b+1, l+1\}$.
\end{theorem}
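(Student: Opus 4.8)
The plan is to run the argument behind Theorem~\ref{thm:curve_line_comb}(1) — the free-curve case — while tracking the extra generator and the single relation carried by the plus-one generated resolution; it is exactly these that produce the additional value $l+1$. Write $d:=\deg\CC$, $\CC=\CC'\cup L$, and set $m:=|\CC'\cap L|+\epsilon(\CC',L)$, so the claim is $m\le a+1$ or $m\in\{b,b+1,l+1\}$. The first step is formal: a plus-one generated curve with exponents $(a,b)$ and level $l$ has generalized exponents $(a,b,l)$, and $a\le b\le l$ with $a+b=d$; the inequality $b\le l$ follows from minimality of the resolution in Definition~\ref{def:POG} together with $D_0(\CC)$ being torsion-free (this also rules out $l=a=b$), while $a+b=d$ was already recorded in the discussion following Definition~\ref{def:POG}.

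The main input is the comparison, for the triple $(\CC,\CC',L)$, that already underlies Theorem~\ref{thm:curve_line_comb}: by \cite[Theorem 1.4]{Dimca} and \cite[Theorem 3.2]{MP2}, the generalized exponents of $\CC$, those of $\CC'$, and the single integer $m$ are tied together by a short exact sequence relating $D_0(\CC)$, an appropriate shift of $D_0(\CC')$, and the restriction of logarithmic derivations along $L$ — a rank-one graded module over a polynomial ring in two variables, generated in degree $m-1$. The correction $\epsilon(\CC',L)$ is precisely what must be added to the naive count $|\CC'\cap L|$ for this sequence to stay exact in the curve setting, where $L$ may meet $\CC'$ at non-quasihomogeneous points; for line arrangements the term vanishes and one recovers the Euler-restriction sequence used in \cite{AIM}.

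With this sequence and the plus-one generated resolution of $D_0(\CC)$ in hand, one carries out the casework dictated by Theorem~\ref{thm: min_gens_sum} applied to $\CC'$ (free, plus-one generated, or neither). In each case the sequence pins down which of the generators of $D_0(\CC)$, in degrees $a$, $b$, $l$, survives restriction to $L$, respectively whether the degree-$(l+1)$ relation is the datum that becomes visible there; comparing degrees then forces $m$ into the small regime $m\le a+1$ or into one of the three large values $b$, $b+1$, $l+1$, the value $l+1$ coming precisely from the level generator or the relation. The coincidences $a=b$ and $b=l$, and the subcase where $\CC'$ is itself free while $\CC$ is plus-one generated, are checked directly; the subcase where $\CC'$ is neither free nor plus-one generated lands in $m\le a+1$ by the same bookkeeping, since one passes from $\CC'$ to $\CC$ by adding a single line.

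The step I expect to be the real obstacle is keeping the defect $\epsilon(\CC',L)$ consistent throughout. In \cite{AIM} everything is a line arrangement, all singularities are quasihomogeneous, and one only manipulates graded $S$-modules; here one must use the local analysis of \cite{Dimca} identifying $\epsilon(\CC',L)$ with the exact discrepancy between the length of the scheme $\CC'\cap L$ and the jump of the logarithmic bundle $\cE_\CC$ along $L$, and then carry that correction faithfully through the comparison sequence and through the edge cases above. Once the $\epsilon$-bookkeeping is settled, the remainder is the same degree chase as in the proof of Theorem~\ref{thm:AIM_restr_POG}.
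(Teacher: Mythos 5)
Your starting point is right: the engine of the proof is the exact sequence from \cite[Theorem 2.3]{Dimca},
$$0 \rightarrow \cE_{\CC'}(-1) \longrightarrow \cE_{\CC} \rightarrow \mathcal{O}_L\bigl(1-|\CC'\cap L|-\epsilon(\CC',L)\bigr) \rightarrow 0,$$
whose third term already has the correction $\epsilon(\CC',L)$ built in, so the ``$\epsilon$-bookkeeping'' you worry about is not actually the obstacle. The genuine gap is in the second half. What the paper uses, and what your sketch never supplies, is the classification of the possible splitting types of $\cE_{\CC}\otimes\mathcal{O}_L$ for a \emph{plus-one generated} curve $\CC$ (\cite[Theorem 2.8]{MP2}): the restriction must be one of $\mathcal{O}_L(-a)\oplus\mathcal{O}_L(1-b)$, $\mathcal{O}_L(1-a)\oplus\mathcal{O}_L(-b)$, or $\mathcal{O}_L(-l)\oplus\mathcal{O}_L(l+1-a-b)$. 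Feeding these three options into the surjection $\cE_{\CC}\otimes\mathcal{O}_L\twoheadrightarrow\mathcal{O}_L(1-m)$ (with $m=|\CC'\cap L|+\epsilon(\CC',L)$) and running the elementary argument of \cite[Proposition 2.5]{MV} is precisely what produces the list $m\le a+1$ or $m\in\{b,b+1,l+1\}$, the value $l+1$ coming from the third, ``jumping'' splitting type. Your plan to instead read off ``which generator of $D_0(\CC)$ in degrees $a,b,l$ survives restriction to $L$'' does not substitute for this: the degrees of a minimal generating set of a non-free graded module do not determine the splitting type of its sheafification along a line, and that determination is a nontrivial theorem, not a degree chase.

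The replacement strategy you propose --- casework on whether $\CC'$ is free, plus-one generated, or neither, via Theorem \ref{thm: min_gens_sum} --- also breaks down at a specific point. When $\CC'$ is free or plus-one generated one can indeed appeal to addition-deletion statements relating the invariants of $\CC'$, $\CC$ and $m$; but when $\CC'$ is neither, there is no such theorem, and your assertion that this case ``lands in $m\le a+1$ by the same bookkeeping'' is exactly the claim that needs proof. The paper's route avoids this entirely by constraining $\cE_{\CC}|_L$ directly, with no hypothesis on $\CC'$ at all. (Your preliminary observations --- $a+b=\deg\CC$ and $b\le l$ from minimality of the resolution --- are correct, and they are used implicitly when ordering the third splitting type, since $l\ge b$ gives $a+b-l-1\le a-1$; but they do not close the main gap.)
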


\begin{proof}
Recall that, by \cite[Theorem 2.3]{Dimca} (extending and based on \cite[Theorem 1.6]{STY}), we have an exact sequence of vector bundles:
$$
0 \rightarrow \mathcal{E}_{\CC'}(-1) \overset{\alpha_L}\longrightarrow  \mathcal{E}_{\CC} \rightarrow \mathcal{O}_L(1-|\CC' \cap L| -\epsilon(\CC',L)) \rightarrow 0.
$$
In particular, we have a surjective map 
$$ \mathcal{E}_{\CC} \rightarrow \mathcal{O}_L(1-|\CC' \cap L| -\epsilon(\CC',L)).$$
 Tensor this map by $\mathcal{O}_L$, to obtain again a surjection,  
 $$\mathcal{E}_{\CC} \otimes \mathcal{O}_L  \rightarrow \mathcal{O}_L(1-|\CC' \cap L| -\epsilon(\CC',L)).$$
  By \cite[Theorem 2.8]{MP2}, we know the possible splitting types onto an arbitrary line of the vector bundle $ \mathcal{E}_{\CC}$ associated to a plus-one generated curve $\CC$.  More precisely,  $\mathcal{E}_{\CC} \otimes \mathcal{O}_L$ must be one of the three: $ \mathcal{O}_L(-a) \otimes  \mathcal{O}_L(1-b)$, $ \mathcal{O}_L(1-a) \otimes  \mathcal{O}_L(-b)$ or $ \mathcal{O}_L(-l) \otimes  \mathcal{O}_L(l+1-a-b)$.

Then the same argument as in the proof  of \cite[Proposition 2.5]{MV} proves our claim.
\end{proof}
 
\section{Main results}
\label{sec:main}

We start by looking at CL-arrangements, which were our initial object of study for this note, as they naturally extend line arrangements. Another motivation to start with the class of CL-arrangements is that the proofs of our statements concerning CL-arrangements are then easily extended to reduced curves.

Let 
$$ \CC = \bigcup _i C _i \cup \bigcup _j L_j$$
 be the decomposition into irreducible components of a CL-arrangement $\CC$, where $C_i$'s are smooth conics and $L_j$'s lines. 
 
 \noindent We will generally follow the definitions and constructions from \cite{ST}. 
 
 All conics referred to in this section are considered to be  {\it smooth} conics.\\
 
 For an arbitrary smooth conic $C= C_{i_0}$ in the arrangement $\CC$ define {\it the associated triple  
 $$(\CC, \CC', \CC'')$$
  with respect to the conic $C$} by
 
 $$
 \CC'  = \CC \setminus \{C_{i_0}\}  \coloneqq \bigcup_{i \neq i_0} C_i \cup \bigcup_{j} L_j
 $$
 and 
 $$
 \CC'' \coloneqq \CC' \cap C.
 $$
 
 We will sometimes write $L \in \CC$, respectively $C \in \CC$,  meaning the line $L$, respectively the conic $C$, are  irreducible components of the CL-arrangement $\CC$. We will use the same convention if $C, L$ are irreducible components of a curve $\CC$.
 
 
If $\CC, \CC'$ have only  quasihomogeneous singularities, we will call  $(\CC, \CC', \CC'')$ a {\it  quasihomogeneous triple}. 

 Assume without loss of generality that the (smooth) conic $C$ is defined by the homogeneous degree $2$ polynomial $f_C= y^2-xz$.
 Denote  
 $$k \coloneqq  |\CC''|= | \CC' \cap C|.$$
  Denote moreover  by  $\cE_{\CC},  \cE_{\CC'}$ the sheafification of the $S$-modules of logarithmic derivations $D_0(\CC)$, respectively $D_0(\CC')$. They are locally free sheaves, i.e. rank $2$ vector bundles.\\

By \cite[Proposition 3.7]{ST}, we have the short exact sequence:

\begin{equation}
\label{eq:sheaves_exact}
0 \rightarrow \cE_{\CC'}(-2)  \overset{\cdot f_C}\rightarrow \cE_{\CC}  \rightarrow i_* \OO_{\PP^1}(-k) \rightarrow 0
\end{equation}

where $\PP^1$ is parametrized  by $[s:t]$ and $i: \PP^1  \xrightarrow{ [s^2:st:t^2]} \PP^2$, $i$ being the inclusion of the conic $C$ into $\PP^2$.

For a quasihomogeneous triple $(\CC, \CC', \CC'')$, when $\CC$ is free, from \cite[Lemma 3.6, Proposition 3.7 and the proof of Lemma 3.10]{ST} one can extract formulas for the Hilbert series of the logarithmic derivations module $D_0(\CC')$, in terms of $k$ and of the exponents of the free CL-arrangement $\CC$, see Proposition \ref{prop:HS_del}.  The computations in \cite{ST} are based on the existence of the short exact sequence of sheaves \eqref{eq:sheaves_exact}. 

Since \cite[Lemma 3.10]{ST} gives the recipe for computing  the Hilbert series for the dual $\cE^{\vee}_{\CC'}$ of  $\cE_{\CC'}$, one has to take into account the relation between a rank $2$ vector bundle and its dual, which holds for an arbitrary rank $2$ vector bundle $\cE$:
$$
\cE^{\vee}  \simeq \cE(-c_1(\cE))
$$
where $c_1(\cE)$ is the first Chern number of $\cE$.
Finally, recall that, for a reduced curve $\CC$ in $\PP^2$, 
$$c_1(\cE_{\CC}) = 1-\deg(\CC).$$

\begin{prop}
\label{prop:HS_del}
Let  $(\CC, \CC', \CC'')$ be a quasihomogeneous triple with respect to the conic $C \in \CC$ such that $\CC$ is free with exponents $(a,b)$. Let $k = |\CC''|$ and $\deg(\CC) = d$. 

  If $k=2m$, then 
$$HS(D_0(\CC'))(t) = \frac{t^a+t^b+t^{d-3-m} - t^{d-1-m}}{(1-t)^3}.$$
 If $k=2m+1$, then 
$$HS(D_0(\CC'))(t) = \frac{t^a+t^b+2t^{d-3-m} - 2t^{d-2-m}}{(1-t)^3}.$$

\end{prop}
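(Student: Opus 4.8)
The plan is to extract the Hilbert series of $D_0(\CC')$ from the short exact sequence of sheaves \eqref{eq:sheaves_exact}, combined with the known Hilbert series of a free module. First I would record that, since $\CC$ is free with exponents $(a,b)$, the module $D_0(\CC)$ is free with generators in degrees $a,b$, hence
$$HS(D_0(\CC))(t) = \frac{t^a+t^b}{(1-t)^3}.$$
Because $\CC$ and $\CC'$ are both free of isolated (in fact quasihomogeneous) singularities, $\cE_{\CC}$ and $\cE_{\CC'}$ are the sheafifications and there is no discrepancy in low degrees between the module and the sheaf: more precisely, for a free curve the global sections $\bigoplus_n H^0(\PP^2,\cE_{\CC}(n))$ reconstruct $D_0(\CC)$, and the higher cohomology $H^1(\PP^2,\cE_{\CC}(n))$ vanishes. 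The same holds for $\CC'$ provided $\CC'$ is free; when $\CC'$ is only plus-one generated, one still has $H^1(\PP^2,\cE_{\CC'}(n))$ controlled, and in all the relevant degrees the Euler characteristic computation below is exact. I would make this precise by taking the long exact sequence in cohomology associated to \eqref{eq:sheaves_exact} twisted by $\cO_{\PP^2}(n)$.

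Next I would compute the Hilbert series of the middle term $i_*\OO_{\PP^1}(-k)$. Here $i:\PP^1\to\PP^2$ is the degree $2$ embedding $[s:t]\mapsto[s^2:st:t^2]$, so $i^*\cO_{\PP^2}(1) = \cO_{\PP^1}(2)$ and therefore
$$h^0\!\left(\PP^1, i^*\cO_{\PP^2}(n)\otimes\cO_{\PP^1}(-k)\right) = h^0\!\left(\PP^1,\cO_{\PP^1}(2n-k)\right) = \max\{0,\,2n-k+1\}.$$
Summing $(2n-k+1)$ against $t^n$ over the range where it is nonnegative, and writing $k=2m$ or $k=2m+1$, gives a rational function; the two parities produce the two different shapes (the numerators $t^{d-3-m}-t^{d-1-m}$ versus $2t^{d-3-m}-2t^{d-2-m}$ are exactly what this sum yields after shifting by the appropriate power, since for $C$ of degree $2$ one has $\deg\CC = \deg\CC'+2$, i.e. $\deg\CC' = d-2$, and one needs to track the twist $\cE_{\CC'}(-2)$ carefully). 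Concretely: $\sum_{n\geq 0}\max\{0,2n-2m+1\}t^n = \frac{t^m(1+t)}{(1-t)^2}\cdot$(correction), and $\sum_{n\geq 0}\max\{0,2n-2m\}t^n = \frac{2t^{m+1}}{(1-t)^2}$ up to lower-order terms — I would do this elementary summation carefully rather than quote it.

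Then I would assemble: from the exactness of \eqref{eq:sheaves_exact} twisted by $\cO_{\PP^2}(n)$ and the vanishing of the relevant $H^1$ terms, the Hilbert function of $D_0(\CC)$ in degree $n$ equals the Hilbert function of $D_0(\CC')$ shifted by $2$ (coming from $\cE_{\CC'}(-2)$) plus $h^0(\cO_{\PP^1}(2n-k))$. This yields
$$HS(D_0(\CC))(t) = t^2\,HS(D_0(\CC'))(t) + HS\big(i_*\OO_{\PP^1}(-k)\big)(t),$$
so
$$HS(D_0(\CC'))(t) = t^{-2}\left(\frac{t^a+t^b}{(1-t)^3} - HS\big(i_*\OO_{\PP^1}(-k)\big)(t)\right).$$
Substituting the parity-dependent formula for the last term and simplifying over the common denominator $(1-t)^3$ gives the two claimed expressions. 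The main obstacle I anticipate is the cohomology-vanishing bookkeeping: I must verify that the long exact cohomology sequence actually lets me read off Hilbert functions (not merely Euler characteristics) in every degree, i.e. that $H^1(\PP^2,\cE_{\CC'}(n-2)) = 0$ for all $n$ in the relevant range (true when $\CC'$ is free, and true in the plus-one generated case in the degrees that matter because of the specific resolution), and that the connecting maps do not introduce a correction. This is precisely the point where the quasihomogeneity hypothesis and the input from \cite[Lemma 3.6, Proposition 3.7, proof of Lemma 3.10]{ST} are used, so I would lean on those lemmas to certify the vanishing, and then the rest is the routine power-series manipulation sketched above. I would also double-check the edge cases $m=0$ and small $m$ separately, since the naive summation formulas for $\sum\max\{0,\cdot\}t^n$ can pick up spurious terms there; however, since $k = |\CC'\cap C|\geq 1$ always (as $\CC'$ is nonempty and meets the conic), the small cases are benign.
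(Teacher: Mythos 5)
There is a genuine gap, and it sits exactly at the point you flag as ``the main obstacle'': your central identity
$$HS(D_0(\CC))(t) = t^2\,HS(D_0(\CC'))(t) + HS\big(i_*\OO_{\PP^1}(-k)\big)(t)$$
is false in general. Twisting \eqref{eq:sheaves_exact} by $\cO_{\PP^2}(n)$ and taking cohomology gives
$$0 \to H^0(\cE_{\CC'}(n-2)) \to H^0(\cE_{\CC}(n)) \to H^0(\OO_{\PP^1}(2n-k)) \to H^1(\cE_{\CC'}(n-2)) \to H^1(\cE_{\CC}(n))=0,$$
so the correct degreewise relation is $\dim D_0(\CC)_n = \dim D_0(\CC')_{n-2} + h^0(\OO_{\PP^1}(2n-k)) - h^1(\cE_{\CC'}(n-2))$. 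The term you need to vanish is $H^1(\cE_{\CC'}(n-2))$, not $H^1(\cE_{\CC}(n))$; and $H^1_*(\cE_{\CC'})\neq 0$ precisely when $\cE_{\CC'}$ does not split, i.e.\ when $\CC'$ is not free --- which is most of the cases the proposition must cover (and the cases that make Theorem \ref{thm:del_conic} interesting). Your dismissal of this term is also circular: the proposition is the tool used to \emph{decide} whether $\CC'$ is free or plus-one generated, so you cannot assume either to justify the vanishing. A concrete sanity check: your identity, solved for $HS(D_0(\CC'))$, yields the numerator $t^{a-2}+t^{b-2}+t^{m}-t^{m-2}$, which agrees with the stated $t^a+t^b+t^{d-3-m}-t^{d-1-m}$ only when $m\in\{a,b\}$ (the free case) and even produces negative powers of $t$ for small $m$; the discrepancy is exactly $\sum_n h^1(\cE_{\CC'}(n))t^n$.

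The repair --- and the route the paper takes via \cite[Lemmas 3.6, 3.8, 3.10, Proposition 3.7]{ST} --- is to dualize \eqref{eq:sheaves_exact} first. Applying $\mathcal{H}om(-,\cO_{\PP^2})$ gives
$$0 \to \cE_{\CC}^{\vee} \to \cE_{\CC'}^{\vee}(2) \to \mathcal{E}xt^1\big(i_*\OO_{\PP^1}(-k),\cO_{\PP^2}\big) \to 0,$$
where the $\mathcal{E}xt^1$ sheaf is again a twisted line bundle pushed forward from $C\cong\PP^1$. Now the \emph{free} bundle $\cE_{\CC}^{\vee}\cong\cO(a)\oplus\cO(b)$ sits on the left, its $H^1$ vanishes in all twists, and the global-sections sequence is exact on the right in every degree, so one reads off $HS$ of $H^0_*(\cE_{\CC'}^{\vee})$ exactly; converting back via $\cE_{\CC'}^{\vee}\simeq\cE_{\CC'}(-c_1(\cE_{\CC'}))=\cE_{\CC'}(d-3)$ produces the shifts $d-3-m$, $d-1-m$ in the statement. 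This dualization is precisely what the paragraph preceding the proposition is alerting the reader to; your elementary summation of $h^0(\OO_{\PP^1}(2n-k))$ is fine and reusable, but it must be fed into the dual sequence, not the original one.
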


\begin{theorem}
\label{thm:del_conic}
Let  $(\CC, \CC', \CC'')$ be a quasihomogeneous triple with respect to the conic $C \in \CC$ such that $\CC$ is free with exponents $(a,b), \; a\leq b$. Let $k = |\CC''|$. 
\begin{enumerate}
\item  $k=2m$ 
	\begin{enumerate}
	\item $ m \in \{a,b\}$ if and only if $\CC'$ is free. It this case, $\CC'$ has exponents $(a, b-2)$ if $m=a$, respectively $(a-2, b)$ if $m=b$.
	\item  If $ m \notin \{a,b\}$, then:
		\begin{enumerate}
		\item  $m=a-1$ if and only if  $\CC'$ is plus-one generated. In this case, $\CC'$ has exponents $(a,b-1)$ and level $b$.
		\item $m \leq a-2$ if and only if  $\CC'$ is neither free nor plus-one generated.
		\end{enumerate}
	\end{enumerate}
\item $k=2m+1$
\begin{enumerate}
	\item $a=b=m+1$ if and only if $\CC'$ is free. It this case, $\CC'$ has exponents $(m,m)$.
	\item  If $a \neq m+1$ or $b \neq m+1$, then:
		\begin{enumerate}
		\item  $m=a-1$ if and only if  $\CC'$ is plus-one generated. In this case, $\CC'$ has exponents $(a,b-1)$ and level $b-1$.
		\item $m \leq a-2$ if and only if  $\CC'$ is neither free nor plus-one generated.
		\end{enumerate}
	\end{enumerate}
\end{enumerate}
\end{theorem}

\begin{proof}

In the notations from \eqref{eq: res_gen_deriv}, let $d_1, d_2$ be  the minimal degrees of the homogeneous generators from a minimal set of generators of $D_0(\CC')$, i.e. the first two generalized exponents.  \\ 

 {\it Case $k=2m$.}
 
  In this case, we know from Proposition \ref{prop:HS_del} that the Hilbert series of $D_0(\CC')$ is given by the formula 
\begin{equation}
\label{eq:HS'_even}
 HS(D_0(\CC'))(t) = \frac{t^a+t^b+t^{d-3-m} - t^{d-1-m}}{(1-t)^3}.
 \end{equation} 
 $\CC'$ is free only if there is a cancellation in the numerator of the above formula. 
  If a cancellation happens, then either  $a=d-1-m$ or $b=d-1-m$.  Assume first that $a=d-1-m$. 
  Since $d=a+b+1,$
 it follows that $$b = m.$$ Then 
 $$HS(D_0(\CC'))(t) = \frac{t^{a-2}+t^b}{(1-t)^3}$$
  and 
  $$a-2+b = d-3 = \deg(\CC')-1.$$
   By Theorem \ref{thm: min_gens_sum}, 
 $\CC'$ is free with exponents $(a-2, b)$, which just recovers \cite[Theorem 3.4, case $k=2m, \; (2) \Rightarrow (1)$]{ST}, with a slightly shortened argument.
 
 The case  $b=d-1-m$ is analogous and in this case one gets that $\CC'$ is free with exponents $(a, b-2)$.
 
 Conversely, if $m \in \{a,b\}$, then, assuming  for instance $m=a$, 
 $$HS(D_0(\CC'))(t) = \frac{t^{b-2}+t^a}{(1-t)^3},$$
  and, by  Theorem \ref{thm: min_gens_sum}, $(a, b-2)$ are the minimal degrees of the homogeneous generators from a minimal set of generators of $D_0(\CC')$ and $\CC'$ is free with exponents $(a, b-2)$.
  
   The case $m=b$ is completely analogous and one gets $\CC'$ free with exponents $(a-2, b)$.\\
 
 Assume now  $m \notin \{a,b\}$, which is equivalent to $(d-m-1) \notin \{a,b\}$.  Then there is no cancellation in the numerator of the Hilbert series.  This implies that $\CC'$ is not free.
 
Considering the general type \eqref{eq: res_gen_deriv}  of  resolution for $D_0(\CC')$, we get 
\begin{equation}
\label{eq:incld1d2}
\{d_1,d_2\} \subset \{a,b,d-3-m\},
\end{equation}
so, by the minimality of $d_1, d_2$,  $$d_1+d_2 \leq a+b$$
  i.e.,  keeping in mind that $\deg(\CC') = d-2 = a+b-1,$
   $$d_1+d_2 \leq \deg(\CC')+1.$$
 Since $\CC'$ is not free, by Theorem \ref{thm: min_gens_sum},  
 $$d_1+d_2 \geq \deg(\CC'),$$
 Summing up the two inequalities,
$$ \deg(\CC') \leq d_1+d_2 \leq \deg(\CC')+1.$$
  
Consider first the case  $d_1+d_2 = \deg(\CC')$. By Theorem \ref{thm: min_gens_sum} this is equivalent to  $\CC'$ being plus-one generated with exponents $(d_1, d_2)$.
 Since at the same time  $d_1+d_2 = a+b-1$, we cannot have $\{d_1, d_2\} = \{a,b\}$.  So necessarily 
 $$d-3-m < b, \; \{d_1,d_2\} = \{a, d-3-m\}.$$
This implies  $b = d-2-m$.  An inspection of the Hilbert series formula of $D_0(\CC')$ shows that the plus-one generated $\CC'$ is of level $b$. To sum up, $\CC'$ is plus-one generated with exponents $(a,b-1)$ and level $b$. Moreover, notice that $b = d-2-m$ is equivalent to  $m=a-1$.
  
Consider now the case  $d_1+d_2 =\deg(\CC')+1$. This implies that $\CC'$ is neither free nor plus-one generated, by Theorem \ref{thm: min_gens_sum}. Since $d_1+d_2 = a+b $,  by \eqref{eq:incld1d2}  we necessarily have 
$$\{d_1,d_2\} = \{ a, b\}.$$
Moreover, since $a \leq b$, it follows $b \leq  d-3-m$, i.e. $m \leq  a-2$.\\

  Conversely,  we have to consider the cases $m=a-1$ and $m \leq  a-2$.
  
   If $m=a-1$, then $d-3-m = b-1$ and necessarily $\{d_1,d_2\} =  \{a,d-3-m\}$, so $d_1+d_2 = \deg(\CC')$. Again by Theorem \ref{thm: min_gens_sum},  $\CC'$ is plus-one generated. It has exponents $(a, b-1)$ and level $b$, by \eqref{eq:HS'_even}.

If $m \leq  a-2$, i.e.  $b \leq  d-3-m$, then  $\{d_1,d_2\} = \{ a, b\}$, so 
  $$d_1+d_2 =  \deg(\CC')+1.$$ By Theorem \ref{thm: min_gens_sum},  $\CC'$ is neither free nor plus-one generated. \\
  
  {\it Case $k=2m+1$.}
  
  In this case, we know from Proposition \ref{prop:HS_del} that the Hilbert series of $D_0(\CC')$ is given by the formula 
  \begin{equation}
  \label{eq:HS'_odd}
  HS(D_0(\CC'))(t) = \frac{t^a+t^b+2t^{d-3-m} - 2t^{d-2-m}}{(1-t)^3}.
  \end{equation}
  
  As in the previous case,  a necessary condition for $\CC'$ to be free  is the  cancellation of the term  $- 2t^{d-2-m}$  in the numerator of the above formula, i.e. necessarily  $a=b=m+1$. 
Then $$HS(D_0(\CC'))(t) =  \frac{2t^{m}}{(1-t)^3}$$ 
so $d_1 = d_2 = m$, and, since $\deg(\CC') = 2m+1$, by Theorem \ref{thm: min_gens_sum},  $\CC'$ is free with exponents $(m,m)$.

Conversely, assume $a=b=m+1$, then $HS(D_0(\CC'))(t) =  \frac{2t^{m}}{(1-t)^3}$ and the same argument as above shows that $\CC'$ is free with exponents $(m,m)$.

This recovers \cite[Theorem 3.4, case $k=2m+1, \; (1)$]{ST}.\\

Assume now  $a \neq  m+1$ or $b \neq m+1$ By the above argument, this is equivalent to $\CC'$ being not free. Then at least one term $-t^{d-2-m}$ does not cancel in the numerator of the Hilbert series.  
We have 
  \begin{equation}
 \label{eq:multiset}
 \{d_1,d_2\} \subset \{a,b,d-3-m\},
 \end{equation}
so  $$d_1+d_2 \leq a+b.$$

 Since $\CC'$ is not free, by Theorem \ref{thm: min_gens_sum},  $d_1+d_2 \geq \deg(\CC')$, so, since $ \deg(\CC') = a+b-1$, 
$$ \deg(\CC') \leq d_1+d_2 \leq \deg(\CC')+1.$$
 $$ a+b-1 \leq d_1+d_2 \leq a+b.$$

 Notice that $b=m+1$ and $a \neq m+1$ cannot happen simultaneously. If we assume the contrary, we must have $a<b, a = d-2-m$. So $b> d-2-m$, and this is in contradiction to the formula \eqref{eq:HS'_odd},
  which in this case would translate into
 $$ HS(D_0(\CC'))(t) = \frac{t^{b > d-2-m}+2t^{d-3-m} - t^{d-2-m}}{(1-t)^3}.$$
 
 As we have seen,  $d_1+d_2 \in  \{\deg(\CC'), \deg(\CC')+1 \}$.
 The equality  $d_1+d_2 = \deg(\CC')$ is equivalent, by Theorem \ref{thm: min_gens_sum}, to  $\CC'$ being plus-one generated with exponents $(d_1, d_2)$.
 Since at the same time  $d_1+d_2 = a+b-1$, then, by \eqref{eq:multiset}, necessarily $$d-3-m < b,$$ 
  hence either 
  $$\{d_1,d_2\} = \{a, d-3-m\}\;  \text{and}\;  b = d-2-m,$$
   or 
   $$d_1=d_2=d-3-m.$$

If  $\{d_1,d_2\} = \{a, d-3-m\}$, then, by the Hilbert series formula of $D_0(\CC')$ \eqref{eq:HS'_odd},  the level of $\CC'$  is $b-1$. Moreover, notice that $b = d-2-m$ is equivalent to  $m=a-1$.

If  $d_1=d_2=d-3-m$ and, moreover, $a=d-3-m$, we are back in the previous case. Otherwise, $a > d-3-m$, which leads to $d_1+d_2 \leq a+b-2$, contradiction.\\
  
 Let us prove now the "only if" implication of case (2)(b)(i).  Hence, we assume $m=a-1$ (equivalently, $b = d-2-m$). Since $b \neq m+1$, it follows $a<b$. So $a \leq d-3-m$.
   Then necessarily $\{d_1,d_2\} =  \{a,d-3-m\}$, so $d_1+d_2 = \deg(\CC')$. Again by Theorem \ref{thm: min_gens_sum},  $\CC'$ is plus-one generated.  By \eqref{eq:HS'_odd} its exponents are $(a, b-1)$  and level $ b-1$.\\
  
 Finally, consider the case  $d_1+d_2  = \deg(\CC')+1$. This is equivalent to $\CC'$ being neither free nor plus-one generated, by Theorem \ref{thm: min_gens_sum}.  In particular, by the argument in the above paragraph, $m \neq a-1$, since $m = a-1$ implies $\CC'$ plus-one generated. So,  $b \neq d-2-m$. Then, a cancellation in the numerator of the Hilbert series formula \eqref{eq:HS'_odd} would be possible if and only if $a=d-2-m$. But this would imply $d_1=d_2=d-3-m$ and $d_1+d_2 \leq a+b-2$, contradiction. So, there is no cancellation  in the numerator of the Hilbert series formula \eqref{eq:HS'_odd}.
  Since $d_1+d_2 = a+b $, after taking into account the possible cases arising from the inclusion \eqref{eq:multiset}, it follows that  
 $$\{d_1,d_2\} = \{ a, b\}.$$
Moreover, since $a \leq b$, it follows $b \leq  d-3-m$, i.e. $m \leq  a-2$.\\
  
We have left to prove the 'only if' part of case (2)(b)(ii).  Assume $m \leq  a-2$ (equivalently,  $b \leq  d-3-m$). Then, by   \eqref{eq:HS'_odd}, $\{d_1,d_2\} = \{ a, b\}$, so $d_1+d_2 = \deg(\CC')+1$. By Theorem \ref{thm: min_gens_sum},  $\CC'$ is neither free nor plus-one generated. \\
\end{proof}

Let us try to shed some more  light on the situation when the curve $\CC'$, that is the result of deletion of a conic from a free curve, from 
the above theorem,  is neither free nor plus-one generated. The next couple of statements (\ref{cor:not_free_or_POG} and \ref{rem:del_case_3_detailed}), that give a precise characterization of the  curve $\CC'$ in this situation, are implicit in the proof of Theorem \ref{thm:del_conic}.

\begin{corollary}
\label{cor:not_free_or_POG}
Let $\CC$ be a free CL-arrangement with exponents $(a,b), \; a \leq b$, and  $C$ an arbitrary conic in $\CC$, such that $(\CC, \CC', \CC'')$ is the quasihomogeneous triple with respect to the conic $C$. Let $d_1, d_2$ be the generalized exponents of $\CC'$ defined in  \eqref{eq: gen_exp}.
 If $\CC'$ is neither free nor plus-one generated, then  $$d_1+d_2 = \deg(\CC') +1.$$
\end{corollary}

\begin{proof}
From the proof of  Theorem \ref{thm:del_conic} we already know that $d_1 = a, \; d_2 = b$. Since $\deg(\CC') = \deg(\CC) -2 = a+b-1$, the conclusion follows.
\end{proof}

In fact, we can extract from the proof of  Theorem \ref{thm:del_conic} an even more precise characterisation of the curve $\CC'$ obtained by deletion, when $\CC'$  is neither free nor plus-one generated. 

\begin{remark}
\label{rem:del_case_3_detailed}
In the hypothesis and notations of Theorem \ref{thm:del_conic}, let us describe in more detail the derivations module $D_0(\CC')$ of the curve $\CC'$ resulting by deletion, from cases (1)(b)(ii) and (2)(b)(ii). That is, we consider the cases in which the result of the deletion of a conic from $\CC$  is neither free nor plus-one generated. \\

\noindent Denote  $d'=\deg(\CC')$, and recall that $a+b=d'+1$.\\

In the case  (1)(b)(ii), $D_0(\CC')$ has a resolution of type 
$$0 \rightarrow  S(-d'-1+m) \rightarrow  S(-d'+1+m) \oplus S(-b) \oplus S(-a) \rightarrow D_0(\CC') \rightarrow 0,$$
where $d'-1-m \geq b$. \\

In the case  (2)(b)(ii), $D_0(\CC')$ has a resolution of type 
$$0 \rightarrow  S(-d'+m)^2 \rightarrow  S(-d'+1+m)^2 \oplus S(-b) \oplus S(-a) \rightarrow D_0(\CC') \rightarrow 0,$$
where $d'-1-m \geq b$.\\

This follows straightforward from the proof of Theorem \ref{thm:del_conic}.	
\end{remark}
  
\begin{corollary}
\label{cor:comb_del}
Let $\CC$ be a free CL-arrangement with exponents $(a,b), \; a \leq b$, and  $C$ an arbitrary conic in $\CC$, such that $(\CC, \CC', \CC'')$ is the quasihomogeneous triple with respect to the conic $C$. Let $k =|\CC''|$.  
\begin{enumerate}
\item  If $k=2m$, then the only possible values for $m$ are $m=b$ or $m \leq a$. 
\item $k=2m+1$, then $m=a-1=b-1$ or $m \leq a-1$. 
\end{enumerate}
\end{corollary}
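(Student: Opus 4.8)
The plan is to read this off from Theorem \ref{thm:del_conic}, the only additional input being that $\CC'$, as a reduced plane curve, falls into \emph{exactly one} of the three classes of Theorem \ref{thm: min_gens_sum} — free, plus-one generated, or neither — these being exhaustive since $d_1 + d_2 \ge \deg(\CC') - 1$ holds for the generalized exponents of any reduced curve (this is the very trichotomy of cases already exploited in the proof of Theorem \ref{thm:del_conic}). So I would run through the three classes and collect, via Theorem \ref{thm:del_conic}, the values of $m$ compatible with each.

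For $k = 2m$: if $\CC'$ is free, Theorem \ref{thm:del_conic}(1)(a) gives $m \in \{a,b\}$; if $\CC'$ is plus-one generated, then $m \notin \{a,b\}$ by (1)(a), so (1)(b)(i) gives $m = a-1$; if $\CC'$ is neither, then again $m \notin \{a,b\}$, so (1)(b)(ii) gives $m \le a-2$. Thus $m \in \{a,b\} \cup \{a-1\} \cup \{\,m : m \le a-2\,\} = \{\,m : m \le a\,\} \cup \{b\}$, which is assertion (1). In particular the ranges $a < m < b$ and $m > b$ are ruled out, since any such $m$ lies outside $\{a,b\}$ and exceeds $a-1$.

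For $k = 2m+1$: if $\CC'$ is free, Theorem \ref{thm:del_conic}(2)(a) forces $a = b = m+1$, i.e.\ $m = a-1 = b-1$; if $\CC'$ is plus-one generated then it is in particular not free, hence $(a,b) \ne (m+1,m+1)$, and (2)(b)(i) gives $m = a-1$; if $\CC'$ is neither, then again $(a,b) \ne (m+1,m+1)$ and (2)(b)(ii) gives $m \le a-2$. Collecting these, $m \le a-1$ in every case, and the boundary value $m = a-1$ occurs only when $\CC'$ is free (in which case moreover $a=b$) or plus-one generated (in which case $a<b$); this yields assertion (2). I expect no genuine obstacle here: beyond quoting Theorem \ref{thm:del_conic}, the only step used is the exhaustiveness of the trichotomy, which is a standard property of the logarithmic derivation module of a reduced curve.
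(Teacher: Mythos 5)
Your proposal is correct and is essentially the intended argument: the paper states the corollary without a separate proof precisely because it is read off from Theorem \ref{thm:del_conic} by running through the (trivially exhaustive) trichotomy free / plus-one generated / neither and collecting the corresponding values of $m$, exactly as you do. The only caveat is that exhaustiveness here needs no appeal to $d_1+d_2 \ge \deg(\CC')-1$, since ``neither free nor plus-one generated'' is by definition the complement of the other two cases.
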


Given a quasihomogeneous triple $(\CC, \CC', \CC'')$, when $\CC'$ is free, from \cite[Lemma 3,6 Proposition 3.7, Lemma 3.8]{ST}, we get formulas for the Hilbert series of the derivations module $D_0(\CC)$ in terms of $|\CC''|$ and the exponents of the free CL-arrangement $\CC'$. The proofs of the results mentioned from \cite{ST} are based on the existence of the short exact sequence of sheaves \eqref{eq:sheaves_exact}.

\begin{prop}
\label{prop:HS_add}
Let  $(\CC, \CC', \CC'')$ be a quasihomogeneous triple with respect to the conic $C$ in $\CC$ such that $\CC'$ is free with exponents $(a',b')$. Let $k = |\CC''|$. 

\noindent  If $k=2m$, then 
\begin{equation}
\label{eq:HS_even}
HS(D_0(\CC))(t) = \frac{t^{a'+2} + t^{b'+2}+t^{m}-t^{m+2}}{(1-t)^3}.
\end{equation}
  If $k=2m+1$, then 
\begin{equation}
\label{eq:HS_odd}
HS(D_0(\CC))(t) = \frac{t^{a'+2} + t^{b'+2}+2t^{m+1}-2t^{m+2}}{(1-t)^3}.
\end{equation}

\end{prop}

\begin{theorem}
\label{thm:add_conic}
Let  $(\CC, \CC', \CC'')$ be a quasihomogeneous triple with respect to the conic $C$ in $\CC$ such that $\CC'$ is free with exponents $(a',b'), \; a' \leq b'$. Let $k = |\CC''|$. 
\begin{enumerate}
\item  $k=2m$ 
	\begin{enumerate}
	\item $ m \in \{a',b'\}$ if and only if $\CC$ is free. In this case, $\CC$ has exponents $(a', b'+2)$ for $m=a'$, respectively $(a'+2, b')$ for $m=b'$.
	\item  If $ m \notin \{a',b'\}$, then:
		\begin{enumerate}
		\item  $m=b'+1$ if and only if  $\CC$ is plus-one generated. In this case, $\CC$ has exponents $(a'+2,b'+1)$ and level $b'+2$.
		\item $m \geq b'+2$ if and only if  $\CC$ is neither free nor plus-one generated.
		\end{enumerate}
	\end{enumerate}
\item $k=2m+1$
\begin{enumerate}
	\item $a'=b'=m$ if and only if $\CC$ is free. It this case, $\CC$ has exponents $(m+1,m+1)$.
	\item  If $a' \neq m$ or $b' \neq m$, then:
		\begin{enumerate}
		\item  $m=b'$ if and only if  $\CC$ is plus-one generated. In this case, $\CC$ has exponents $(a'+2,b'+1)$ and level $b'+1$.
		\item $m \geq b'+1$ if and only if  $\CC'$ is neither free nor plus-one generated.
		\end{enumerate}
	\end{enumerate}
\end{enumerate}
\end{theorem}

\begin{proof}
{\it Case $k=2m$ }

If we assume $\CC$ is free then there must be a cancellation in the Hilbert series formula of $D_0(\CC)$, which happens only if $a'=m$ or $b'=m$. In this case $\CC$ has exponents $(a', b'+2)$ or $(a'+2, b')$. Conversely, if $a'=m$ or $b'=m$, then by Proposition \ref{prop:HS_add} and Theorem \ref{thm: min_gens_sum}, $\CC$ is free with exponents $(a', b'+2)$ or $(a'+2, b')$.\\

 If $ m \notin \{a',b'\}$, then there is no cancellation the Hilbert series formula of $D_0(\CC)$. Let $d_1, d_2$ be defined as in \eqref{eq: res_gen_deriv}, for the resolution of the module  $D_0(\CC)$. Then
 $$\{d_1, d_2\} \subset \{a'+2, b'+2, m\}.$$
By \eqref{eq:HS_even}, necessarily $m \geq b'$. Since $m \neq b'$, it follows $m > b'$. 

If $m = b'+1$ then $\{d_1, d_2\}  = \{a'+2, b'+1\}$, so $d_1+d_2 = \deg(\CC)$. By Theorem \ref{thm: min_gens_sum}, $\CC$ is plus-one generated and by \eqref{eq:HS_even} the exponents of $\CC$ are $(a'+2, b'+1)$ and the level  is $b'+2$. 

Conversely, if $\CC$ is plus-one generated then  $d_1+d_2 = \deg(\CC)$, by Theorem \ref{thm: min_gens_sum}. Since $\deg(\CC) = a'+b'+3$, this implies $\{d_1, d_2\}  = \{a'+2, m\}$ and $m=b+1$.

If $m \geq b'+2$ then $\{d_1, d_2\}  = \{a'+2, b'+2\}$, so $d_1+d_2 = \deg(\CC)+1$. By Theorem \ref{thm: min_gens_sum}, $\CC$ is neither free, nor plus-one generated.

Conversely, if $\CC$ is neither free nor plus-one generated, then $d_1+d_2 > \deg(\CC)= a'+b'+3$. But $\{d_1, d_2\}$ are the smallest two elements in $\{a'+2, b'+2, m\}$, hence $m \geq b'+2$.\\

{\it Case $k=2m+1$}

If $\CC$ is free, then the term $-2t^{m+2}$ from \eqref{eq:HS_odd} must cancel out, hence necessarily $a'=b' = m$. In this case the exponents of $\CC$ are $(m+1,m+1)$. Conversely, if $a'=b' = m$, then 
$$HS(D_0(\CC))(t) = \frac{2t^{m+1}}{(1-t)^3}$$
and, by Theorem \ref{thm: min_gens_sum}, since $2m+2 = a'+b'+2 =  \deg(\CC)-1$, $\CC$ is free. From the above formula for the Hilbert series of $D_0(\CC)$,  $\exp(\CC)=(m+1,m+1)$. \\

Assume in what follows $a' \neq m$ or $b' \neq m$. In any case, by \eqref{eq:HS_odd}, for $d_1, d_2$ as defined in  \eqref{eq: res_gen_deriv},  for the module $D_0(\CC)$, we have the inclusion
\begin{equation}
\label{eq:incl'}
\{d_1, d_2\} \subset \{a'+2, b'+2, m+1\}.
\end{equation}

First let us prove that the case $m=a'$ and $m \neq b'$ is not possible. Assuming the contrary, we get $m=a'<b'$.  But, by \eqref{eq:HS_odd}, $m \geq b'$, contradiction. 
Hence, in any case $m \neq a'$.  This implies, again by \eqref{eq:HS_odd}, that $a'<m$.\\

Consider now the case $m=b'$. Then, still from \eqref{eq:HS_odd}, $\{d_1, d_2\} = \{a'+2, m+1\}$. But $a'+2+m+1 = a'+b'+3 = \deg(\CC)$, so, by Theorem \ref{thm: min_gens_sum}, $\CC$ is plus-one generated. By \eqref{eq:HS_odd}, $\CC$ has exponents $(a'+2, b'+1)$ and level $b'+1$. 

Conversely, if  $\CC$ is plus-one generated, then $d_1 + d_2= \deg(\CC) = a'+b'+3$, by Theorem \ref{thm: min_gens_sum}. Since $a'+2 \leq m+1$,  $\{d_1, d_2\} = \{a'+2, m+1\}$ and $m+1 = b'+1$, i.e., $b'=m$.\\

If $m \neq b'$ (in which case, we also know that $m \neq a'$), then, from \eqref{eq:HS_odd}, $m \geq b'+1$. From the proof above, this is equivalent to $\CC$ being neither free nor plus-one generated. Incidentally,  from $m \geq b'+1$, in this case we get $\{d_1, d_2\}  = \{a'+2, b'+2\}$.
\end{proof}

We can actually say more about the result of the addition of a conic to a free curve, when this result is neither free nor plus-one generated, see \ref{cor:add_not_free_nPOG} and \ref{rem:add_case_3_detailed}. Both statements are implicitly contained in  the proof of Theorem  \ref{thm:add_conic}.

\begin{corollary}
\label{cor:add_not_free_nPOG}
Let $(\CC, \CC', \CC'')$ be a quasihomogeneous triple with respect to an arbitrary conic $C$ in $\CC$ such that $\CC'$ is free with exponents $(a',b'), a' \leq b'$.   Let $d_1, d_2$ be the generalized exponents of $\CC$ defined in  \eqref{eq: gen_exp}.
If $\CC$ is neither free nor plus-one generated, then  $$d_1+d_2 = \deg(\CC) +1.$$
\end{corollary}

\begin{proof}
We know from the proof of Theorem  \ref{thm:add_conic} that $d_1 = a'+2, \; d_2 =b'+2$. Since $ \deg(\CC)  =  \deg(\CC') +2 = a'+b'+3$, the conclusion follows.
\end{proof}

\begin{remark}
\label{rem:add_case_3_detailed}
In the hypothesis and notations of Theorem \ref{thm:add_conic}, we are able to describe a minimal resolution of  the derivations module $D_0(\CC)$ of the curve $\CC$, resulting by addition of a conic to a free curve, in cases (1)(b)(ii) and (2)(b)(ii). That is, in the cases when the result of the addition is neither free nor plus-one generated.\\

In the case  (1)(b)(ii), $D_0(\CC)$ has a resolution of type 
$$0 \rightarrow  S(-m-2) \rightarrow  S(-m) \oplus S(-b'-2) \oplus S(-a'-2) \rightarrow D_0(\CC) \rightarrow 0,$$
where $m \geq b'+2$ and $(a'+2)+(b'+2)=\deg(\CC)+1$.\\

In the case  (2)(b)(ii), $D_0(\CC')$ has a resolution of type 
$$0 \rightarrow  S(-m-2)^2 \rightarrow  S(-m-1)^2 \oplus S(-b'-2) \oplus S(-a'-2) \rightarrow D_0(\CC) \rightarrow 0, $$
where $m \geq b'+1$ and $(a'+2)+(b'+2)=\deg(\CC)+1$.\\

This is an immediate consequence of the proof of Theorem \ref{thm:add_conic}.	 
\end{remark}

\begin{corollary}
\label{cor:comb_add}
Let $(\CC, \CC', \CC'')$ be a quasihomogeneous triple with respect to an arbitrary conic $C$ in $\CC$ such that $\CC'$ is free with exponents $(a',b'), a' \leq b'$. Let $k =|\CC''|$.
\begin{enumerate}
\item  If $k=2m$, then the only possible values for $m$ are $m=a'$ or $m \geq b'$. 
\item $k=2m+1$, then either $m =a'=b'$ or $m\geq b'$. 
\end{enumerate}
\end{corollary}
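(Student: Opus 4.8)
The plan is to read the statement off Theorem~\ref{thm:add_conic}. The only extra input needed is the trivial fact that the curve $\CC$ is free, plus-one generated, or neither free nor plus-one generated; Theorem~\ref{thm:add_conic} characterises each of these three alternatives by an ``if and only if'' condition on $m$, so for any triple $\CC$ lies in exactly one of the three cases and that case pins down the admissible $m$. I would then check, separately for $k$ even and $k$ odd, that taken together these conditions rule out every value of $m$ except those listed in the corollary, arguing by contradiction.

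\emph{Case $k=2m$.} Suppose $m\neq a'$ and $m<b'$; since $a'\le b'$ this forces $m\notin\{a',b'\}$. Reading part~(1)(a) of Theorem~\ref{thm:add_conic} as an equivalence, $\CC$ is not free. Since $m\notin\{a',b'\}$, part~(1)(b) applies: by~(i), $\CC$ is plus-one generated if and only if $m=b'+1$, impossible as $m<b'<b'+1$; by~(ii), $\CC$ is neither free nor plus-one generated if and only if $m\ge b'+2$, impossible as $m<b'<b'+2$. Thus $\CC$ would be none of the three, a contradiction, and so $m=a'$ or $m\ge b'$.

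\emph{Case $k=2m+1$.} The argument is the same, now via part~(2). Assume $m<b'$ together with the negation of ``$m=a'=b'$''; this leaves two subcases, $m<a'$ or $a'\le m<b'$ (the latter forcing $a'<b'$). In each subcase one verifies that the hypothesis ``$a'\neq m$ or $b'\neq m$'' of part~(2)(b) holds while ``$a'=b'=m$'' fails, hence: $\CC$ is not free by~(2)(a); $\CC$ is plus-one generated only if $m=b'$, excluded since $m<b'$; and $\CC$ is neither free nor plus-one generated only if $m\ge b'+1$, excluded since $m<b'<b'+1$. The contradiction yields $m=a'=b'$ or $m\ge b'$. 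The first alternative is of course subsumed in the second when $a'=b'$; it is recorded separately only to parallel the even case and because, for odd $k$, freeness of $\CC$ forces $a'=b'$.

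I do not anticipate a genuine obstacle: the corollary is pure bookkeeping on top of Theorem~\ref{thm:add_conic}. The only point needing care is to locate correctly the single ``transitional'' value of $m$ at which $\CC$ becomes plus-one generated---$m=b'+1$ for even $k$, $m=b'$ for odd $k$---and to confirm that it lies in the range $m\ge b'$ claimed by the corollary, so that no admissible value of $m$ is overlooked between the free range and the ``neither'' range.
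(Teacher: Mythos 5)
Your proof is correct and is essentially the paper's own argument: the paper states Corollary~\ref{cor:comb_add} without proof as an immediate consequence of Theorem~\ref{thm:add_conic}, and your case analysis via the trichotomy (free / plus-one generated / neither) is exactly the bookkeeping that justifies it. Your side remark that the alternative $m=a'=b'$ is numerically subsumed in $m\ge b'$ (and is listed only to mirror the theorem's case structure) is also accurate.
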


Taken together, Corollaries \ref{cor:comb_del}, \ref{cor:comb_add}  and Theorem \ref{thm:curve_line_comb} spell a generalization of Abe's Theorem \ref{thm:Abe_restr_free} to quasihomogeneous CL-arrangements. 
More precisely, under quasihomogeneity assumptions on the CL-arrangements appearing in the statement, we have the following result.

\begin{theorem}
\label{thm:weak_comb}
Let $\CC$ be a free CL-arrangement  with exponents $(a,b), \; a \leq b$.
\begin{enumerate}
\item If $L$ is an arbitrary line in $\CC$ and $\CC' \coloneqq \CC \setminus \{L\}$, then either $|\CC' \cap L| = b+1$ or  $|\CC' \cap L| \leq  a+1$.
\item If $L$ is an arbitrary line such that  $L \notin \CC$, then $|\CC \cap L| = a+1$ or  $|\CC \cap L| \geq  b+1$.
\item If $C$ is an arbitrary conic in $\CC$,  let $k$ be the number of singular points of $\CC$ situated on $C$. Then:
	\begin{enumerate}
	\item  If $k=2m$, then the only possible values for $m$ are $m=b$ or $m \leq a$. 
	\item $k=2m+1$, then $m=a-1=b-1$ or $m \leq a-1$. 
	\end{enumerate}
\item  If $C$ is an arbitrary conic such that  $C \notin \CC$, let $k = |\CC \cap C|$.  Then:
\begin{enumerate}
\item  If $k=2m$, then the only possible values for $m$ are $m=a$ or $m \geq b$. 
\item $k=2m+1$, then either $m =a=b$ or $m\geq b$. 
\end{enumerate}
\end{enumerate}
\end{theorem}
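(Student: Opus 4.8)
The plan is to obtain all four assertions as essentially immediate consequences of results already established, the one recurring technical point being that the quasihomogeneity hypotheses force every correction term of type $\epsilon(-,-)$ to vanish. Recall that a reduced curve has vanishing $\epsilon$ at a singular point precisely when it is quasihomogeneous there, and that for curves $\CC_1,\CC_2$ with no common component one has $\epsilon(\CC_1,\CC_2)_p=\epsilon(\CC_1\cup\CC_2,p)-\epsilon(\CC_1,p)$; hence if both $\CC_1$ and $\CC_1\cup\CC_2$ are quasihomogeneous then $\epsilon(\CC_1,\CC_2)_p=0$ at every $p\in\CC_1\cap\CC_2$ and $\epsilon(\CC_1,\CC_2)=0$. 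Every CL-arrangement entering the argument below — namely $\CC$, the deletion $\CC\setminus\{C\}$, and the enlargements $\CC\cup L$ and $\CC\cup C$ — is quasihomogeneous under the standing assumptions, so each such $\epsilon$ appearing in Theorem \ref{thm:curve_line_comb} and in Corollaries \ref{cor:comb_del}, \ref{cor:comb_add} equals $0$.

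For (1) I would apply Theorem \ref{thm:curve_line_comb}(1) to the free curve $\CC$ together with its line component $L$, writing $\CC=\CC'\cup L$; it yields $|\CC'\cap L|=b+1-\epsilon(\CC',L)$ or $|\CC'\cap L|\le a+1-\epsilon(\CC',L)$, and $\epsilon(\CC',L)=0$ because $\CC$ and $\CC'$ are quasihomogeneous CL-arrangements. For (2) I would apply Theorem \ref{thm:curve_line_comb}(2) to $\CC$ and a line $L\not\subset\CC$, obtaining $|\CC\cap L|=a+1-\epsilon(\CC,L)$ or $|\CC\cap L|\ge b+1-\epsilon(\CC,L)$ with $\epsilon(\CC,L)=0$ since $\CC$ and $\CC\cup L$ are quasihomogeneous CL-arrangements. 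This gives (1) and (2) verbatim.

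For (3), let $C$ be a conic component of $\CC$ and $(\CC,\CC',\CC'')$ the associated triple. Since $C$ is smooth, a point of $C$ is a singular point of $\CC$ exactly when it lies on some component other than $C$, i.e. on $\CC'$; therefore the number $k$ of singular points of $\CC$ on $C$ equals $|\CC'\cap C|=|\CC''|$. The triple is quasihomogeneous and $\CC$ is free with exponents $(a,b)$, so Corollary \ref{cor:comb_del} applies and produces exactly the stated list of values for $m$. For (4), put $\widehat\CC:=\CC\cup C$ for a smooth conic $C\notin\CC$; it is again a quasihomogeneous CL-arrangement, and the triple associated to the conic $C$ in $\widehat\CC$ has middle term $\CC$ — free with exponents $(a,b)$ — and intersection term $\CC\cap C$, whose cardinality is $k=|\CC\cap C|$ (again, by smoothness of $C$, these are precisely the singular points of $\widehat\CC$ lying on $C$). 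Corollary \ref{cor:comb_add}, applied with $(a',b')=(a,b)$, then gives the asserted possibilities for $m$, completing the proof.

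Finally, I do not expect a genuine obstacle here: the theorem only repackages Theorem \ref{thm:curve_line_comb} and Corollaries \ref{cor:comb_del}, \ref{cor:comb_add} into a single statement paralleling Abe's Theorem \ref{thm:Abe_restr_free}. The two points that deserve care are the vanishing of the $\epsilon$-terms — which is exactly where one needs the quasihomogeneity of \emph{all} the CL-arrangements involved, not merely of $\CC$ — and the elementary identification, in (3) and (4), of the integer $k$ (the number of singular points of the ambient arrangement lying on the conic $C$) with the cardinality of the relevant intersection set, which is legitimate because $C$ is smooth.
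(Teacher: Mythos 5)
Your proposal is correct and follows exactly the route the paper takes: the paper's proof of Theorem \ref{thm:weak_comb} is the one-line reduction to Theorem \ref{thm:curve_line_comb} and Corollaries \ref{cor:comb_del}, \ref{cor:comb_add}, and you have simply made explicit the two details it leaves implicit (the vanishing of the $\epsilon$-terms under the standing quasihomogeneity assumptions, and the identification of $k$ with $|\CC''|$ via smoothness of $C$).
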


\begin{proof}
Immediately from Corollaries \ref{cor:comb_del} and \ref{cor:comb_add}  and Theorem \ref{thm:curve_line_comb} .
\end{proof}

 As we will see in Subsection \ref{sub:generalization_Non-quasihomogeneity}, the quasihomogenity assumptions can be dropped from the hypothesis of Theorem \ref{thm:weak_comb}, 
see  the generalization in Theorem \ref{thm:cors_gen}.

\begin{remark}
\label{rem:ST_overlap}
Sub-cases $(1)(a), (2)(a)$  of  Theorems \ref{thm:del_conic} and  \ref{thm:add_conic} only recover \cite[Theorem 3.4]{ST}.
\end{remark}

\subsection{From CL-arrangements to reduced curves}

The definition of  quasihomogeneous triple extends easily from CL-arrangements to curves. Let us recall  here, for convenience, this particular instance of \cite[Definition 1.5]{STY}. 

\begin{definition}
\label{def:trp}
 If $\CC'$ is a reduced curve in $\PP^2$ such that $C$ is a smooth conic which is not an irreducible component of $\CC'$, and $\CC \coloneqq \CC' \cup C, \; \CC'' \coloneqq \CC' \cap C$, we call $(\CC, \CC',  \CC'')$ a quasihomogeneous triple if the singularities of $\CC, \CC'$ are quasihomogeneous.
\end{definition}
 Then, by \cite[Theorem 1.6, Remark 1.8]{STY}, there exists an exact sequence:
\begin{equation}
\label{eq:seq_quasi_curves}
0 \rightarrow \cE_{\CC'}(-2)  \overset{\cdot f_C}\rightarrow \cE_{\CC}  \rightarrow i_* \OO_{\PP^1}(-|\CC''|) \rightarrow 0
\end{equation}
where $i$ is an inclusion of $C$ in $\PP^2$.\\

Our main results can be extended verbatim to this generalized version of  quasihomogeneous triples,  as defined in \ref{def:trp}.
Theorems \ref{thm:del_conic}, \ref{thm:add_conic} and their  corollaries  actually hold as stated in this new, more general setting. The proofs are basically the same, but  using instead of \eqref{eq:sheaves_exact}  the exact sequence \eqref{eq:seq_quasi_curves}.

There is one argument from the original proof that may need justification in this new context. At some point, in \cite[Lemmas 3.8, 3.10]{ST} the fact that a derivations module $D$ is isomorphic to the module of sections of the sheafification of $D$ is used. This is true for derivations modules associated to reduced curves in general, not only for derivations modules associated to CL-arrangements, see for instance \cite[Proposition 2.1]{AD}.

\subsubsection{{\bf Non-quasihomogeneous case}}
\label{sub:generalization_Non-quasihomogeneity}

We may drop the quasihomogeneity assumption on the triple $(\CC, \CC',  \CC'')$  as well, by replacing the exact sequence \eqref{eq:seq_quasi_curves} by the more general exact sequence  from \cite[Theorem 2.3]{Dimca}, see \eqref{eq:seq_gen}. We should also notice that, since $C$ is a  smooth conic, its divisors are completely determined by their degrees.
\begin{equation}
\label{eq:seq_gen}
0 \rightarrow \cE_{\CC'}(-2)  \overset{\cdot f_C}\rightarrow \cE_{\CC}  \rightarrow i_* \OO_{\PP^1}(- |\CC''|-\epsilon(\CC', C) )\rightarrow 0
\end{equation}

The only change to the statements of Theorems \ref{thm:del_conic},  \ref{thm:add_conic} and their respective corollaries 
  and subsequent remarks
  is that 
$$k=|\CC''|+\epsilon(\CC', C),$$
as opposite to  $k = |\CC''|$ in the quasihomogeneous case. The proofs go exactly the same, using the short exact sequence \eqref{eq:seq_gen} instead of \eqref{eq:sheaves_exact}. As a consequence, Corollaries   \ref{cor:not_free_or_POG}, \ref{cor:add_not_free_nPOG} and Remarks  \ref{rem:del_case_3_detailed}, \ref{rem:add_case_3_detailed} hold as stated, for non-quasihomogeneous triples of curves as well. \\

In this wider setting, Theorems \ref{thm:del_conic} and  \ref{thm:add_conic} generalize to Theorems \ref{thm:del_conic_gen} and \ref{thm:add_conic_gen}.

\begin{theorem}
\label{thm:del_conic_gen}
Let  $\CC$ be a free reduced curve with exponents $(a,b), \; a\leq b$, $C$ a smooth conic which is an irreducible component of $\CC$ and $\CC' \coloneqq \CC \setminus \{C\}, \; \CC'' \coloneqq  \CC' \cap C$. Let $k = |\CC''|+ \epsilon(\CC', C)$. 
\begin{enumerate}
\item  $k=2m$ 
	\begin{enumerate}
	\item $ m \in \{a,b\}$ if and only if $\CC'$ is free. It this case, $\CC'$ has exponents $(a, b-2)$ if $m=a$, respectively $(a-2, b)$ if $m=b$.
	\item  If $ m \notin \{a,b\}$, then:
		\begin{enumerate}
		\item  $m=a-1$ if and only if  $\CC'$ is plus-one generated. In this case, $\CC'$ has exponents $(a,b-1)$ and level $b$.
		\item $m \leq a-2$ if and only if  $\CC'$ is neither free nor plus-one generated.
		\end{enumerate}
	\end{enumerate}
\item $k=2m+1$
\begin{enumerate}
	\item $a=b=m+1$ if and only if $\CC'$ is free. It this case, $\CC'$ has exponents $(m,m)$.
	\item  If $a \neq m+1$ or $b \neq m+1$, then:
		\begin{enumerate}
		\item  $m=a-1$ if and only if  $\CC'$ is plus-one generated. In this case, $\CC'$ has exponents $(a,b-1)$ and level $b-1$.
		\item $m \leq a-2$ if and only if  $\CC'$ is neither free nor plus-one generated.
		\end{enumerate}
	\end{enumerate}
\end{enumerate}
\end{theorem}

A small point on a notation in the above theorem, and in what follows: $\CC \setminus \{C\}$ denotes the curve that is the union of all the irreducible components of  $\CC$, other than $C$.\\

 This next result sums up the generalized versions of Corollary \ref{cor:not_free_or_POG} and Remark  \ref{rem:del_case_3_detailed}.

\begin{corollary}
\label{cor:-gen_not_free_or_POG}
Let $\CC$ be a free reduced curve with exponents $(a,b), \; a \leq b$.
 If $C$ is an arbitrary smooth conic which is an irreducible component of $\CC$, let $\CC' = \CC \setminus \{C\}$ and $k =|\CC' \cap C|+\epsilon(\CC', C)$. Assume $\CC'$ is neither free nor plus-one generated and denote $d'=\deg(\CC')$. 
\begin{enumerate}
\item  If $k=2m$, then  $D_0(\CC')$ has a minimal resolution of type
 $$0 \rightarrow  S(-d'-1+m) \rightarrow  S(-d'+1+m) \oplus S(-b) \oplus S(-a) \rightarrow D_0(\CC') \rightarrow 0,$$
 where $d'-1-m \geq b$. 
\item  If $k=2m+1$, then  $D_0(\CC')$ has a minimal resolution of type
$$0 \rightarrow  S(-d'+m)^2 \rightarrow  S(-d'+1+m)^2 \oplus S(-b) \oplus S(-a) \rightarrow D_0(\CC') \rightarrow 0,$$
where $d'-1-m \geq b$.
\end{enumerate}
\end{corollary}

\begin{theorem}
\label{thm:add_conic_gen}
Let  $\CC'$ be a free reduced curve with exponents $(a',b'), \; a' \leq b'$, $C$ a smooth conic which is not an irreducible component of $\CC'$ and $\CC \coloneqq \CC' \cup C, \; \CC'' \coloneqq  \CC' \cap C$. Let $k = |\CC''|+ \epsilon(\CC', C)$. 
\begin{enumerate}
\item  $k=2m$ 
	\begin{enumerate}
	\item $ m \in \{a',b'\}$ if and only if $\CC$ is free. It this case, $\CC$ has exponents $(a', b'+2)$ for $m=a'$, respectively $(a'+2, b')$ for $m=b'$.
	\item  If $ m \notin \{a',b'\}$, then:
		\begin{enumerate}
		\item  $m=b'+1$ if and only if  $\CC$ is plus-one generated. In this case, $\CC$ has exponents $(a'+2,b'+1)$ and level $b'+2$.
		\item $m \geq b'+2$ if and only if  $\CC$ is neither free nor plus-one generated.
		\end{enumerate}
	\end{enumerate}
\item $k=2m+1$
\begin{enumerate}
	\item $a'=b'=m$ if and only if $\CC$ is free. It this case, $\CC$ has exponents $(m+1,m+1)$.
	\item  If $a' \neq m$ or $b' \neq m$, then:
		\begin{enumerate}
		\item  $m=b'$ if and only if  $\CC$ is plus-one generated. In this case, $\CC$ has exponents $(a'+2,b'+1)$ and level $b'+1$.
		\item $m \geq b'+1$ if and only if  $\CC'$ is neither free nor plus-one generated.
		\end{enumerate}
	\end{enumerate}
\end{enumerate}
\end{theorem}

This next result encapsulates the generalized versions of Corollary  \ref{cor:add_not_free_nPOG} 
 and Remark  \ref{rem:add_case_3_detailed}.

\begin{corollary}
\label{cor:+gen_not_free_or_POG}
Let  $\CC'$ be a free reduced curve with exponents $(a',b'), \; a' \leq b'$, $C$ a smooth conic which is not an irreducible component of $\CC'$ and $\CC \coloneqq \CC' \cup C$. Let $k = |\CC' \cap C|+ \epsilon(\CC', C)$.  Assume $\CC$ is neither free nor plus-one generated. 
 \begin{enumerate}
\item  If $k=2m$, then $D_0(\CC)$ has a minimal   resolution of type:
$$0 \rightarrow  S(-m-2) \rightarrow  S(-m) \oplus S(-b'-2) \oplus S(-a'-2) \rightarrow D_0(\CC) \rightarrow 0,$$
where $m \geq b'+2$.
\item  If $k=2m+1$, then $D_0(\CC)$ has a minimal   resolution of type:
$$0 \rightarrow  S(-m-2)^2 \rightarrow  S(-m-1)^2 \oplus S(-b'-2) \oplus S(-a'-2) \rightarrow D_0(\CC) \rightarrow 0, $$
where $m \geq b'+1$. 
\end{enumerate}
\end{corollary}

Theorem \ref{thm:weak_comb}(3)-(4)
  generalizes to:

\begin{theorem}
\label{thm:cors_gen}
Let $\CC$ be a free reduced curve with exponents $(a,b), \; a \leq b$.
\begin{enumerate}
\item If $C$ is an  arbitrary smooth conic which is an irreducible component of $\CC$, let $\CC' = \CC \setminus \{C\}$ and $k =|\CC' \cap C|+\epsilon(\CC', C)$.
Then:
\begin{enumerate}
\item  If $k=2m$, then the only possible values for $m$ are $m=b$ or $m \leq a$. 
\item $k=2m+1$, then $m=a-1=b-1$ or $m \leq a-1$. 
\end{enumerate}
\item If $C$ is an arbitrary smooth conic which is not an irreducible component of $\CC$, let $k =|\CC \cap C|+\epsilon(\CC, C)$. Then:
\begin{enumerate}
\item  If $k=2m$, then the only possible values for $m$ are $m=a$ or $m \geq b$. 
\item $k=2m+1$, then either $m =a=b$ or $m\geq b$. 
\end{enumerate}
\end{enumerate}
\end{theorem}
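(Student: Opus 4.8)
The plan is to read off Theorem~\ref{thm:cors_gen} as a direct consequence of the non-quasihomogeneous deletion and addition theorems, Theorems~\ref{thm:del_conic_gen} and~\ref{thm:add_conic_gen}, by running through the (mutually exclusive and exhaustive) cases ``free / plus-one generated / neither'' for each parity of $k$.

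For part~(1) the conic $C$ is an irreducible component of the free curve $\CC$, so with $\CC' := \CC \setminus C$ and $k = |\CC' \cap C| + \epsilon(\CC', C)$ we are precisely in the hypothesis of Theorem~\ref{thm:del_conic_gen}. First I would take $k = 2m$: according to that theorem $\CC'$ is free exactly when $m \in \{a,b\}$, plus-one generated exactly when $m = a-1$, and neither exactly when $m \le a-2$; since these three cases exhaust all possibilities for $\CC'$, the union of admissible values of $m$ is $\{m : m \le a\} \cup \{b\}$, which is the assertion $m = b$ or $m \le a$. Then for $k = 2m+1$: $\CC'$ is free exactly when $a = b = m+1$, plus-one generated exactly when $m = a-1$ (necessarily with $a<b$), and neither exactly when $m \le a-2$, so the union is $m = a-1 = b-1$ or $m \le a-1$.

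For part~(2) the conic $C$ is not an irreducible component of $\CC$; here I would set $\widetilde{\CC} := \CC \cup C$ and apply Theorem~\ref{thm:add_conic_gen} to the triple $(\widetilde{\CC}, \CC, \CC \cap C)$ with $(a',b') = (a,b)$ and $k = |\CC \cap C| + \epsilon(\CC, C)$. Again splitting by parity: for $k = 2m$ the free, plus-one generated, and ``neither'' cases give $m \in \{a,b\}$, $m = b+1$, and $m \ge b+2$ respectively, whose union is $m = a$ or $m \ge b$; for $k = 2m+1$ they give $a = b = m$, $m = b$, and $m \ge b+1$, whose union is $m = a = b$ or $m \ge b$.

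There is no genuine obstacle here: Theorems~\ref{thm:del_conic_gen} and~\ref{thm:add_conic_gen} are phrased as biconditionals that partition every integer value of $m$ among the three classes, so the whole argument is bookkeeping. The only points that need a moment's care are that the hypothesis $a \le b$ is what makes the free-case lists collapse into the clean form above, and that the ``neither'' clauses really do account for all the remaining $m$, leaving no gap between them and the free/plus-one generated ranges; granting these, the proof is simply ``immediately from Theorems~\ref{thm:del_conic_gen} and~\ref{thm:add_conic_gen}''.
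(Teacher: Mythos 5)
Your proposal is correct and matches the paper's own route: the paper obtains this statement as the non-quasihomogeneous version of Corollaries \ref{cor:comb_del} and \ref{cor:comb_add}, which are themselves read off from the deletion/addition theorems exactly by the exhaustive free / plus-one generated / neither case split you describe. The bookkeeping of the admissible values of $m$ in each parity case is carried out correctly.
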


\begin{remark}
Subsequently to the first versions of this note, the notion of {\it type $2$} curve was introduced, in \cite{ADP}. With this new notion, our Theorems \ref{thm:del_conic_gen}, \ref{thm:add_conic_gen} (and considering Corollaries \ref{cor:-gen_not_free_or_POG}, \ref{cor:+gen_not_free_or_POG}, which are the generalized versions of Corollaries \ref{cor:not_free_or_POG}, \ref{cor:add_not_free_nPOG} and Remarks  \ref{rem:del_case_3_detailed}, \ref{rem:add_case_3_detailed}) state that the deletion, respectively, the addition of a smooth conic, applied to a free curve,  produces as a result either a free, plus-one generated or a type $2$ curve. 
With this,  our Theorem \ref{thm:add_conic_gen}(1) and Corollary \ref{cor:+gen_not_free_or_POG}(1) imply \cite[Theorem 1.19]{ADP}.
\end{remark}

\section{Examples}
\label{sec:exp}

We will present in this section a series of examples. Most of the computations are made using Macaulay2. For some computations Singular was used.\\ 

Let us start with some addition examples.

\begin{example}
\label{ex:1}
Consider the free CL-arrangement $\CC': f=0$, where 
$$ f = (x-y)(x+y)(x^2+y^2-z^2)(2y^2-z^2)(2x^2-z^2).$$ It is free with exponents $(2,5)$. Add to $\CC'$  the smooth conic $$C: x^2+3y^2-2z^2.$$  The triple $(\CC' \cup C, \CC', \CC' \cap C)$ is quasihomogeneous  and $|\CC' \cap C| = 4$. By Theorem \ref{thm:add_conic}, case $(1)(a)$, $\CC$ is free with exponents $(2,7)$.\\

If one adds  the smooth conic $$C: x^2+2y^2-z^2 = 0$$ to  $\CC'$, the resulting CL-arrangement is plus-one generated with exponents $(4,6)$ and level $7$, by Theorem \ref{thm:add_conic},  case $(1)(b)(i)$, since  $k = |\CC' \cap C| = 12$, so $m = 6 = b' +1$. (the triple $(\CC' \cup C, \CC', \CC' \cap C)$ is quasihomogeneous)\\

By the addition of  the smooth conic $$C: x^2+3y^2+7xy-xz-2yz = 0$$  to   $\CC'$, the resulting CL-arrangement is neither free nor plus-one generated, by Theorem \ref{thm:add_conic}, case  $(1)(b)(ii)$, since   the triple $(\CC' \cup C, \CC', \CC' \cap C)$ is quasihomogeneous and $k = |\CC' \cap C| = 16$, so $m = 8 \geq b'+2 = 7$.
\end{example}

Let us now present some deletion examples. 

\begin{example}
Let $\CC: f=0$ be the free arrangement with exponents $(2,5)$ from Example \ref{ex:1}
$$ f = (x-y)(x+y)(x^2+y^2-z^2)(2y^2-z^2)(2x^2-z^2).$$  
 Let $C: x^2+y^2-z^2 = 0$. Then $\CC' \coloneqq  \CC \setminus \{C\}$ is free with exponents $(2,3)$. This follows from Theorem \ref{thm:del_conic}, case $(1)(a)$, since $(\CC, \CC',\CC' \cap C)$ is a quasihomogeneous triple and $k = |\CC' \cap C|=4$.
\end{example}

\begin{example}
Let $\CC: f=0$ 
$$  f = (x-y)(x+y)(y+z)(x^2+y^2-z^2)(2y^2-z^2)(2x^2-z^2).$$  
$\CC$ is free with exponents $(3,5)$. Let  $\CC'= \CC \setminus \{C\}$, where $C: x^2+y^2-z^2 = 0$. Notice that  $(\CC, \CC', \CC' \cap C)$ is a quasihomogeneous triple and $|\CC' \cap C| = 5$.  Then $\CC'$ is plus-one generated with exponents $(3,4)$ and level $4$, by Theorem \ref{thm:del_conic}, case $(2)(b)(i)$.
\end{example}

Let us consider for a change some non-quasihomogeneous examples, for which we will apply the deletion theorem in its general form, i.e. Theorem  \ref{thm:del_conic_gen}.

\begin{example}
\label{ex:del_gen_nfree_npog}
Let $\CC: f=0$, where 
$$
f= (x^2 + 2xy + y^2 + xz)(x^2 + xz + yz)(x^2 + xy + z^2)(x+y-z)y(x+z)\cdot 
$$
$$
(2x+y)(x^2 - y^2 + xz + 2yz)(x^2 + 2xy - xz + yz).
$$
$\CC$ is a free CL-arrangement with exponents $(6,7)$. Consider the conic 
$$C : x^2 - y^2 + xz + 2yz = 0,$$
 which is an irreducible component of $\CC$.
 The conic $C$ intersects $\CC'\coloneqq \CC \setminus \{C\}$ in six points. Two of those singularities are not quasihomogeneous: $P_1 = [0:0:1]$, which is an ordinary singularity where $6$ branches of $\CC$ meet and $P_2 = [1:-2:-1]$, which is an ordinary singularity where $7$ branches of $\CC$ meet. A computation of Milnor and Tjurina numbers of the singularities $P_1, P_2$ in $\CC, \CC'$ shows that 
 $$\epsilon(\CC', C) = \epsilon(\CC', C)_{P_1} + \epsilon(\CC', C)_{P_2} = 1+1=2.$$

Since there exist non-quasihomogeneous singularities, in this case one applies  Theorem \ref{thm:del_conic_gen}. 
$$k= |\CC' \cap C|+\epsilon(\CC', C) = 6+2=8 \text{ and } (a,b) = (6,7)$$
so $m=4 \leq a-2$. So, it follows from  Theorem \ref{thm:del_conic_gen}, case $(1)(b)(ii)$,  that $\CC'$ is neither free, nor plus-one generated. 
\end{example}
 
 \begin{example}
\label{ex:del_free}
Let $\CC: f=0$ be the free CL-arrangement with exponents $(6,7)$ from the previous example:
$$
f= (x^2 + 2xy + y^2 + xz)(x^2 + xz + yz)(x^2 + xy + z^2)(x+y-z)y(x+z)\cdot 
$$
$$
(2x+y)(x^2 - y^2 + xz + 2yz)(x^2 + 2xy - xz + yz).
$$
 Consider the conic in $\CC$
$$C : x^2 + 2xy + y^2 + xz.$$
 We will prove that the CL-arrangement $\CC' \coloneqq \CC \setminus \{C\}$ is free with exponents $(4,7)$.
 
 One has  $|\CC' \cap C| =10$, and two of those $10$ singularities are not quasihomogeneous. They are the points $P_1, P_2$  described in Example \ref{ex:del_gen_nfree_npog}. A computation of the Milnor and Tjurina numbers of the singularities $P_1, P_2$ in $\CC, \CC'$ shows 
 $$\epsilon(\CC', C) = \epsilon(\CC', C)_{P_1} + \epsilon(\CC', C)_{P_2} = 2+2=4.$$

Since $$|\CC' \cap C|+\epsilon(\CC', C) = 10+4=14 \text{ and } (a,b) = (6,7),$$
it follows $m=7 = b$. Apply Theorem \ref{thm:del_conic_gen}, case $(1)(a)$, to conclude that $\CC'$ is indeed free, with exponents $(4,7)$.
 \end{example}
 
\begin{example}
\label{ex:del_npog}
Take again  $\CC: f=0$ to be the free CL-arrangement with exponents $(6,7)$ from the Example \ref{ex:del_gen_nfree_npog}.
$$
f= (x^2 + 2xy + y^2 + xz)(x^2 + xz + yz)(x^2 + xy + z^2)(x+y-z)y(x+z)\cdot 
$$
$$
(2x+y)(x^2 - y^2 + xz + 2yz)(x^2 + 2xy - xz + yz).
$$
 Consider the conic in $\CC$
$$C : x^2 + xz + yz.$$
Let us prove that the CL-arrangement $\CC' \coloneqq \CC \setminus \{C\}$ is plus-one generated. 

The conic $C$ intersects $\CC'$ into $10$  distinct points. One of these singularities is not quasihomogeneous, namely the point $P_1 = [0:0:1]$. A Milnor and Tjurina numbers computation gives
$$\epsilon(\CC', C) = \epsilon(\CC', C)_{P_1} =1.$$

We  can apply now Theorem \ref{thm:del_conic_gen}, case $(2)(b)(i)$:  since 
$$|\CC' \cap C|+\epsilon(\CC', C) = 10+1=11 \text{ and } (a,b) = (6,7),$$
it follows $m=5 = a-1$. So $\CC'$ is plus-one generated, with exponents $(6,6)$ and level $6$.
 \end{example}

\bigskip

Anca~M\u acinic,
Simion Stoilow Institute of Mathematics of the Romanian Academy, 
P.O. Box 1-764, RO-014700 Bucharest, Romania. \\
\nopagebreak
\textit{E-mail address:} \texttt{anca.macinic@imar.ro}

\end{document}